\DeclareSymbolFont{cyrletters}{OT2}{wncyr}{m}{n}
\DeclareMathSymbol{\Sha}{\mathalpha}{cyrletters}{"58}
\title[On multi-poly-Bernoulli-Carlitz numbers]
{On multi-poly-Bernoulli-Carlitz numbers
}
\author{Ryotaro Harada}
\address{Graduate School of Mathematics, Nagoya University, 
Furo-cho, Chikusa-ku, Nagoya 464-8602 Japan }
\email{m15039r@math.nagoya-u.ac.jp}
\date{March 27, 2018}
\newtheorem{thm}{Theorem}
\newtheorem{lem}[thm]{Lemma}
\newtheorem{prop}[thm]{Proposition}  
\theoremstyle{remark}
\theoremstyle{definition}
\newtheorem{defn}[thm]{Definition}
\newtheorem{rem}[thm]{Remark}
\newtheorem{nota}[thm]{Notation}     
\newtheorem{eg}[thm]{Examples}
\begin{document}
\bibliographystyle{amsalpha+}

\begin{abstract}
We introduce multi-poly-Bernoulli-Carlitz numbers, function field analogues of multi-poly-Bernoulli numbers of Imatomi-Kaneko-Takeda. 
We explicitly describe multi-poly-Bernoulli Carlitz numbers in terms of the Carlitz factorial and the Stirling-Carlitz numbers of the second kind and also show their relationships with function field analogues of finite multiple zeta values.
%
\end{abstract}

\maketitle
\tableofcontents
\setcounter{section}{-1}
\section{Introduction}
In this paper, we introduce and study function field analogues of the Bernoulli numbers.  

In 1997, M. Kaneko introduced and investigated generalizations of the Bernoulli numbers, poly-Bernoulli numbers in \cite{MK}. He obtained explicit formulae for poly-Bernoulli numbers which includes the second kind of Stirling numbers. Moreover, He and T. Arakawa found that they are also related to the Arakawa-Kaneko zeta functions at non-positive integers in \cite{AK}. From 2000, several multi-poly-Bernoulli numbers, generalizations of poly-Bernoulli numbers, were posted by Hamahata-Masubuchi \cite{HM}, Imatomi-Kaneko-Takeda \cite{IKT} and M.-S. Kim-T. Kim \cite{KiK} in different ways each other. In \cite{IKT}, K. Imatomi, M. Kaneko, E. Takeda established relationships between multi-poly-Bernoulli numbers and finite multiple zeta values by obtaining some fundamental formulae. 

In 1935, L. Carlitz \cite{Ca} introduced and investigated function field analogues of the Bernoulli numbers, the Bernoulli-Carlitz numbers $BC_n$. By using them, he obtained an analogue of Euler's famous formula $\zeta(m)=-\frac{(2\pi i)^m}{2(m!)}B_m$ (for even $m$) in \cite{Ca} and the von Staudt-Clausen theorem in \cite{Ca37, Ca40}. The latter result was revisited and put in a more conceptual setting by D. Goss in \cite{Go}.\footnote{ An analogue of von Staudt-Clausen theorem stated in \cite{Ca37, Ca40, Go} was corrected by L. Carlitz \cite{Ca41} for $q=2$.} 
E. Gekeler proved several identities for the Bernoulli-Carlitz numbers in \cite{Gek}. Furthermore, H. Kaneko and T. Komatsu obtained explicit formulae for them by using function field analogues of the Stirling numbers in \cite{KK}. 
 In this paper, we introduce in \S \ref{2chp} multi-poly-Bernoulli-Carlitz numbers as function field analogues of multi-poly-Bernoulli numbers and also discuss generalizations of the vanishing condition $BC_n=0\ \ ( q-1 \nmid n)$ shown in \cite{Ca37} and explicit formulae $BC_n=\sum_{j=0}^{\infty}\frac{(-1)^jD_j}{L^2_{j}}{n\brace q^j-1}_{C}$ shown in \cite{KK}.  
In \S \ref{mainchp} we show that multi-poly-Bernoulli-Carlitz numbers with special indices are expressed by Bernoulli-Carlitz numbers. We also show their connection to finite multiple zeta values in function field which were introduced by C.-Y. Chang and Y. Mishiba \cite{CM} as finite variants of Thakur's multiple zeta values in \cite{T1}. 
\section{Notations and Definitions}
\label{No}
\subsection{Notations}
We recall the following notation. 
\begin{itemize}
\setlength{\leftskip}{1.0cm}
\item[$q$] \quad  a power of a prime number $p$.  
\item[$\mathbb{F}_q$] \quad a finite field with $q$ elements.
\item[$\theta$, $t$] \quad independent variables.
\item[$A$] \quad the polynomial ring $\mathbb{F}_q[\theta]$.
\item[$A_{+}$] \quad the set of monic polynomials in $A$.   
\item[$k$] \quad the rational function field $\mathbb{F}_{q}(\theta)$.
\item[$k_{\infty}$]\quad $\mathbb{F}_{q}((1/\theta))$, the completion of $k$ at $\infty$.
\item[$D_i$]\quad $\prod^{i-1}_{j=0}(\theta^{q^i}-\theta^{q^j})\in A_{+}$ where $D_0:=1$.
\item[$L_i$]\quad $\prod^{i}_{j=1}(\theta-\theta^{q^j})\in A_{+}$ where $L_0:=1$.
\item[$\Gamma_{n+1}$]\quad the Carlitz gamma, $\prod_{i}D_i^{n_i}$ ($n = \sum_{i}n_iq^i\in\mathbb{Z}_{\geq0} \ (0\leq n_i\leq q-1)$).  
\item[$\Pi(n)$]\quad the Carlitz factorial, $\Gamma_{n+1}$
\end{itemize}
\subsection{Definition of finite multiple zeta values}
In this subsection, we recall the definition of finite multiple zeta values and its function field analogues which were introduced in \cite{CM}. 

\subsubsection{Characteristic 0 case}
We begin this subsection by recalling the finite multiple zeta values those were introduced by M. Kaneko and D. Zagier in \cite{KZ}.
\begin{defn}[\cite{KZ}]
We set a $\mathbb{Q}$-algebra as follows:
\[
\mathscr{A}:=\prod_{p}\mathbb{Z}\big/p\mathbb{Z}\bigg/\bigoplus_{p}\mathbb{Z}\big/p\mathbb{Z}
\] 
where $p$ runs over all prime numbers. For $\mathfrak{s}:=(s_1, \ldots, s_r)\in\mathbb{Z}^r$, the {\it finite multiple zeta values} are defined as follows:
\[
	\zeta_{\mathscr A}(\mathfrak{s}):=(\zeta_{\mathscr A}(\mathfrak{s})_{(p)})\in{\mathscr A}
\]
where
\[
	\zeta_{\mathscr A}(\mathfrak{s})_{(p)}:=\sum_{p>m_1>\cdots>m_r>0}\frac{1}{m^{s_1}_1\cdots m^{s_r}_r}\in\mathbb{Z}\big/ p\mathbb{Z}.
\]
\end{defn}

\subsubsection{Characteristic $p$ case}

Next, let us turn into function field situation.
In 1935, L. Carlitz \cite{Ca} considered an analogue of the {\it Riemann zeta values} in function field which we call the {\it Carlitz zeta values}.
For $s \in \mathbb{N}$, they are defined by
\[
  \zeta_{A}(s):=\sum_{a\in A_{+}}\frac{1}{a^s}\in k_{\infty}.
\]
D. S. Thakur \cite{T1} generalized this definition to that of {\it multiple zeta values in} $\mathbb{F}_q[t]$, which are defined for ${\mathfrak s}=(s_1, \ldots, s_r)\in\mathbb{N}^r$, 
\begin{align*}
  \zeta_{A}({\mathfrak s})&:=\sum_{\substack{\deg a_1>\cdots>\deg a_r\geq 0\\a_1, \ldots, a_r\in A_+}}\frac{1}{a^{s_1}_1\cdots a^{s_r}_r}\in k_{\infty}.
\end{align*}
Also, Chang-Mishiba and D. S. Thakur concerned $v$-adic variant (\cite{CM1}, \cite{T1}) and finite variant (\cite{CM}, \cite{T2}).   
In this paper, we consider Chang and Mishiba's finite variant (\cite{CM}).
\begin{defn}[\cite{CM}, (2.1)]
We set a $k$-algebra as follows:
\[
\mathscr{A}_{k}:=\prod_{\wp} A\big/ \wp A\bigg/ \bigoplus_{\wp} A\big/ \wp A
\]
where $\wp$ runs over all monic irreducible polynomials in $A$. For $\mathfrak{s}:=(s_1, \ldots, s_r)\in\mathbb{N}^r$ and a monic irreducible polynomial $\wp\in A$, {\it finite multiple zeta values} are defined as follows:
\[
	 \zeta_{\mathscr{A}_{k}}(\mathfrak{s}):=(\zeta_{\mathscr{A}_{k}}(\mathfrak{s})_{\wp})\in\mathscr{A}_{k}
\]
where 
\[
	\zeta_{\mathscr{A}_{k}}(\mathfrak{s})_{\wp}:=\sum_{\substack{\deg\wp>\deg a_1>\cdots>\deg a_r\geq 0\\a_1, \ldots, a_r\in A_+}}\frac{1}{a^{s_1}_1\cdots a^{s_r}_r}\in A\big/\wp A.
\]
\end{defn}

\subsection{Definition of (finite Carlitz) multiple polylogarithms}
\label{fcmpl}

In this subsection, we recall the definition of multiple polylogarithms in characteristic 0 and $p$.
\subsubsection{Characteristic 0 case}
\begin{defn}
 For $\mathfrak{s}:=(s_1, \ldots, s_r)\in\mathbb{Z}^r$, the {\it multiple polylogarithm series}
 ${\rm Li}_{\mathfrak s}(z_1, \ldots, z_r)$ are defined as the following series of $r$-variables $z_1, \ldots, z_r$:
\[
	{\rm Li}_{\mathfrak s}(z_1, \ldots, z_r):=\sum_{m_1>\cdots>m_r>0}\frac{z_1^{m_1}\cdots z_r^{m_r}}{m_1^{s_1}\cdots m_r^{s_r}}\in\mathbb{Q}[[z_1, \ldots, z_r]].
\]
\end{defn}

\subsubsection{Characteristic $p$ case}
In 2014, C.-Y. Chang \cite{C1} introduced the Carlitz multiple polylogarithms as function field analogues of the multiple polylogarithms. 
\begin{defn}[\cite{C1}, Definition 5.1.1]
\label{cmpl}
For ${\mathfrak s}=(s_1, \ldots, s_r)\in\mathbb{N}^r$, the {\it Carlitz multiple polylogarithms} are defined as the following series of $r$-variables $z_1, \ldots, z_r$:
\[
	Li_{\mathfrak{s}}(z_1, \ldots, z_r):=\sum_{i_1>\cdots>i_r\geq0}\frac{z_1^{q^{i_1}}\cdots z_r^{q^{i_r}} }{L_{i_1}^{s_1}\cdots L_{i_r}^{s_r}}\in k[[z_1, \ldots, z_r]].
\]
\end{defn}

\begin{rem}
We recover the {\it Carlitz logarithms} in the case of $r=1$ and $s_1=1$
\[
\log_{C}(z):=\sum_{i\geq0}\frac{z^{q^i} }{L_i}\in k[[z]].
\]
\end{rem}

In \cite{CM}, C.-Y. Chang and Y. Mishiba introduced finite Carlitz multiple polylogarithms, a finite variant of the Carlitz multiple polylogarithms.
\begin{defn}[\cite{CM}, (3.1)]
\label{fcmpldef}
For $\mathfrak{s}=(s_1,\ldots, s_r)\in\mathbb{N}^r$ and $r$-tuple of variables $\mathfrak{z}=(z_1, \ldots, z_r)$, {\it finite Carlitz multiple polylogarithms} are defined as follows:
\[
	Li_{\mathscr{A}_{k}, \mathfrak{s}}(\mathfrak{z}):=(Li_{\mathscr{A}_{k}, \mathfrak{s}}(z_1, \ldots, z_r)_{\wp})\in\mathscr{A}_{k, \mathfrak{z}}
\]
where 
\[
	Li_{\mathscr{A}_{k}, \mathfrak{s}}(z_1, \ldots, z_r)_{\wp}:=\sum_{\deg\wp>i_1>\cdots>i_r\geq0}\frac{z^{q^{i_1}}_1\cdots z^{q^{i_r}}_r}{L^{s_1}_{i_1}\cdots L^{s_r}_{i_r}}\ \text{mod}\ \wp \in A[z_1, \ldots, z_r]/\wp A. 
\]
Here $\mathscr{A}_{k, \mathfrak{z}}$ is the following quotient ring
 \[
 	\mathscr{A}_{k, \mathfrak{z}}:=\prod_{\wp}A[\mathfrak{z}]/\wp A[\mathfrak{z}]\bigg/ \bigoplus_{\wp}A[\mathfrak{z}]/\wp A[\mathfrak{z}]
 \]
(we put $A[\mathfrak{z}]:=A[z_1, \ldots, z_r]$).
\end{defn}

In \cite{CM}, they established an explicit formula expressing $\zeta_{\mathscr{A}_{k}}(\mathfrak{s})$ as a $k$-linear combination of $Li_{\mathscr{A}_{k}, {\mathfrak{s}}}(z_1, \ldots, z_r)_{\wp}$ evaluated at some integral points. Before we recall it, let us prepare the Anderson-Thakur polynomial.



\begin{defn}[\cite{AT}, (3.7.1)]
Let $\theta, t, x$ be independent variables. For $n\in \mathbb{Z}_{\geq0}$, {\it Anderson-Thakur polynomial} $H_n\in A[t]$ is defined by 
\[
	\biggl\{1-\sum^{\infty}_{i=0}\frac{\prod^{i}_{j=1}\bigl( t^{q^i}-\theta^{q^j} \bigr)}{D_i|_{\theta=t}}x^{q^i} \biggr\}^{-1}=\sum^{\infty}_{n=0}\frac{H_n}{\Gamma_{n+1}|_{\theta=t}}x^n.
\]
\end{defn}
\begin{rem}\label{chen}
We note that $H_n=1$ for $0\leq n\leq q-1$.
\end{rem}
G. W. Anderson and D. S. Thakur obtained the following series expansion for $H_{s_i-1}$. 
\begin{prop}[\cite{AT}, (3.7.3), (3.7.4) and \cite{AT09}, 2.4.1]
We consider $r$-tuple $\mathfrak{s}=(s_1, \ldots, s_r)\in\mathbb{N}^r$. For each $s_i\in\mathbb{N}$, the Anderson-Thakur polynomial $H_{s_i-1}$ is expanded as follows.
\begin{align}\label{atpoly}
	H_{s_i-1}=\sum^{m_i}_{j=0}u_{ij}t^j
\end{align}
where $u_{ij}\in A$ satisfying 
\[
	|u_{ij}|_{\infty}<q^{\frac{s_iq}{q-1}}\ \text{and}\ u_{im_i}\neq0.
\]
\end{prop}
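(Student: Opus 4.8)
The plan is to read $H_{s_i-1}$ off the defining generating function and to control the $\theta$-degree and the $t$-degree separately. Write $s=s_i$, abbreviate the numerators $N_i:=\prod_{j=1}^{i}(t^{q^i}-\theta^{q^j})$ and keep the denominators $D_i|_{\theta=t}=\prod_{j=0}^{i-1}(t^{q^i}-t^{q^j})$, so that the inner series in the definition is $E(x)=\sum_{i\ge0}\frac{N_i}{D_i|_{\theta=t}}x^{q^i}$, which has no constant term since its lowest term is $x$. Inverting, $(1-E(x))^{-1}=\sum_{k\ge0}E(x)^k$, and collecting the coefficient of $x^{s-1}$ gives
\[
\frac{H_{s-1}}{\Gamma_{s}|_{\theta=t}}
=\sum_{k\ge0}\ \sum_{\substack{(i_1,\dots,i_k)\\ q^{i_1}+\cdots+q^{i_k}=s-1}}\ \prod_{l=1}^{k}\frac{N_{i_l}}{D_{i_l}|_{\theta=t}},
\]
the inner sum running over ordered tuples of non-negative integers whose $q$-powers sum to $s-1$. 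The decisive point is that $\theta$ enters only through the numerators $N_{i_l}$, whereas all denominators $D_{i_l}|_{\theta=t}$ and the prefactor $\Gamma_s|_{\theta=t}$ are polynomials in $t$ alone.

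First I would prove the size estimate, which is the substantive arithmetic content. From $\deg_{\theta}N_i=\sum_{j=1}^{i}q^j=\frac{q(q^i-1)}{q-1}$, a representation $s-1=q^{i_1}+\cdots+q^{i_k}$ contributes a numerator of $\theta$-degree $\sum_{l}\frac{q(q^{i_l}-1)}{q-1}=\frac{q}{q-1}\bigl((s-1)-k\bigr)$. Placing the finite sum over a common, $\theta$-free denominator $Q(t)$, its numerator $P(\theta,t)$ therefore has $\deg_{\theta}P\le\frac{q}{q-1}\bigl((s-1)-\ell\bigr)$, where $\ell$ is the least number of parts among such representations, namely the base-$q$ digit sum of $s-1$. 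Since $Q$ and $\Gamma_s|_{\theta=t}$ are $\theta$-free, the identity $H_{s-1}\cdot Q(t)=\Gamma_s|_{\theta=t}\cdot P(\theta,t)$ forces $\deg_{\theta}H_{s-1}=\deg_{\theta}P$, and hence, using $(s-1)-\ell\le s-1<s$,
\[
\deg_{\theta}u_{ij}\le\deg_{\theta}H_{s-1}\le\frac{q}{q-1}\bigl((s-1)-\ell\bigr)<\frac{sq}{q-1}
\]
for every coefficient $u_{ij}$; this is exactly the asserted bound $|u_{ij}|_{\infty}<q^{\frac{sq}{q-1}}$.

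It then remains to secure the expansion itself: that $H_{s-1}$ genuinely lies in $A[t]$, so that $m_i:=\deg_{t}H_{s-1}$ is finite, and that $H_{s-1}\ne0$, whence $u_{im_i}\ne0$ by the definition of the degree. Both are already built into the definition via Anderson--Thakur (\cite{AT}, (3.7.3), (3.7.4)), and I would invoke them rather than reprove them. Indeed, here is where the main obstacle lies: the individual summands $\prod_l N_{i_l}/D_{i_l}|_{\theta=t}$ are only rational in $t$, and showing that their weighted sum clears every $t$-pole --- equivalently that the displayed series is $t$-polynomial with $A$-coefficients --- requires the delicate combinatorics of the inversion, in particular a Lucas-type control of the characteristic-$p$ multinomial coefficients counting the ordered tuples above. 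By contrast, the degree and size estimate carried out in the second step is comparatively soft once this integrality is granted, because the denominators carry no $\theta$ and hence cannot inflate the $\theta$-degree.
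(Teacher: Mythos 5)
The paper contains no proof of this proposition: it is imported verbatim from Anderson--Thakur (\cite{AT}, (3.7.3)--(3.7.4); \cite{AT09}, 2.4.1), so your proposal is being compared against a bare citation, and it in fact proves strictly more than the paper does. Your derivation of the size bound is correct: $(1-E(x))^{-1}=\sum_{k\geq0}E(x)^k$ is a valid formal inversion since $E$ has no constant term; the coefficient of $x^{s-1}$ is the finite sum over ordered tuples $(i_1,\dots,i_k)$ with $q^{i_1}+\cdots+q^{i_k}=s-1$ (finite because each part is $\geq1$, so $k\leq s-1$); and since the denominators $D_{i_l}|_{\theta=t}$ and the prefactor $\Gamma_s|_{\theta=t}$ are $\theta$-free, the $\theta$-degree of that coefficient is at most $\max_k\frac{q}{q-1}\bigl((s-1)-k\bigr)\leq\frac{q}{q-1}(s-1)<\frac{sq}{q-1}$, and this bound passes to every $u_{ij}$ once $H_{s-1}\in A[t]$ is granted (note you do not even need the identification of $\ell$ with the $q$-adic digit sum; $\ell\geq0$ already suffices). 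Deferring the integrality $u_{ij}\in A$ and the nonvanishing to \cite{AT} is legitimate here, since that is exactly what the paper itself does for the entire statement. One genuine improvement is available, though: the nonvanishing costs nothing and need not be cited. Specializing $t=\theta$ kills every term of $E(x)$ with $i\geq1$, because the factor with $j=i$ in $N_i=\prod_{j=1}^{i}(t^{q^i}-\theta^{q^j})$ vanishes there, while each denominator $D_i|_{\theta=t}$ specializes to $D_i\neq0$; hence the generating function collapses to $(1-x)^{-1}$, giving $H_n|_{t=\theta}=\Gamma_{n+1}\neq0$ for all $n$, so $H_{s-1}\neq0$ and $u_{im_i}\neq0$ by the definition of $m_i=\deg_t H_{s-1}$. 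Finally, your closing assertion that the cited integrality ``requires'' Lucas-type control of multinomial coefficients in the inversion is a guess about how \cite{AT} argue, not an established fact, and you should present it as motivation only; it plays no role in your proof, which is sound without it.
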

Here we note that $|\ \ |_{\infty}$ is the non-Archimedian absolute value on $k$ so that $|\theta|_{\infty}=q$.
\begin{nota}\label{notasym}
We set following symbols which are introduced in \cite{CM}:
\[
	J_{\mathfrak s}:=\{0, 1, \ldots, m_1\}\times\cdots\times\{0, 1, \ldots, m_r\}.
\]  
For each ${\bf j}=(j_1, \ldots, j_r)\in J_{\mathfrak s}$, we set
\[
	{\bf u_j}:=(u_{ij_i}, \ldots, u_{rj_r})\in A^r,
\]
and
\[
	a_{\bf j}:=a_{\bf j}(t):=t^{j_1+\cdots+j_r}.
\]
Here $u_{ij}$ are the coefficients of \eqref{atpoly}.
\end{nota}

\begin{eg}
We note that when ${\mathfrak s}=(s_1, \ldots, s_r)=(1, \ldots, 1)$, by Remark \ref{chen} and Proposition \ref{atpoly}, we have $J_{\mathfrak s}=\{ 0\}\times\cdots\times\{0\}$ and ${\bf u_j}=(1, \ldots, 1)$ for ${\bf j}\in J_{\mathfrak s}$.  
\end{eg}

The following equation was obtained by C.-Y. Chang and Y. Mishiba in \cite{CM}.
\begin{prop}[\cite{CM}, p.1056]
\label{fmzvfmpl}
For ${\mathfrak s}=(s_1, \ldots, s_r)\in\mathbb{N}^r$, let $\wp\in A$ be a monic irreducible polynomial which satisfy $\wp \nmid \Gamma_{s_1}\cdots\Gamma_{s_r}$. Then we have

\[
	\zeta_{\mathscr{A}_{k}}({\mathfrak s})_{\wp}=\frac{1}{\Gamma_{s_1}\cdots\Gamma_{s_r}}\sum_{{\bf j}\in J_{\mathfrak s}}a_{\bf j}(\theta)Li_{{\mathscr A}_k, {\mathfrak s}}(\bf u_j)_{\wp}.
\]
\end{prop}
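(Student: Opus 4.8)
The plan is to reduce both sides to the elementary power sums
\[
S_d(s):=\sum_{\substack{a\in A_+\\ \deg a=d}}\frac{1}{a^s},
\]
and to compare them degree by degree. First I would regroup the summation defining the finite multiple zeta value according to the degrees $d_\ell:=\deg a_\ell$. Since the constraint $\deg\wp>\deg a_1>\cdots>\deg a_r\geq0$ depends only on these degrees, this yields
\[
\zeta_{\mathscr{A}_{k}}(\mathfrak s)_{\wp}=\sum_{\deg\wp>d_1>\cdots>d_r\geq0}\ \prod_{\ell=1}^{r}S_{d_\ell}(s_\ell)\pmod{\wp}.
\]
Every $a_\ell$ occurring here is monic of degree $<\deg\wp$, hence prime to $\wp$, so each $S_{d_\ell}(s_\ell)$ is $\wp$-integral and this reduction is well defined.

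Next I would expand the right-hand side. Inserting Definition \ref{fcmpldef} together with $a_{\bf j}(\theta)=\theta^{j_1+\cdots+j_r}$ and interchanging the two finite sums (the range $\deg\wp>i_1>\cdots>i_r\geq0$ is finite, so no convergence issue arises), and using that $J_{\mathfrak s}$ is a product set so the sum over $\bf j$ factors, the right-hand side becomes
\[
\frac{1}{\Gamma_{s_1}\cdots\Gamma_{s_r}}\sum_{\deg\wp>i_1>\cdots>i_r\geq0}\frac{1}{L_{i_1}^{s_1}\cdots L_{i_r}^{s_r}}\prod_{\ell=1}^{r}\Bigl(\sum_{j_\ell=0}^{m_\ell}\theta^{j_\ell}u_{\ell j_\ell}^{q^{i_\ell}}\Bigr).
\]
Comparing with the previous display, it suffices to prove, for every single index $s$ and every $d\geq0$, the power sum identity
\[
S_d(s)=\frac{1}{\Gamma_{s}\,L_d^{\,s}}\sum_{j=0}^{m}u_j^{q^d}\theta^j,\qquad\text{where } H_{s-1}=\sum_{j=0}^{m}u_jt^j,
\]
and then to take the product over $\ell=1,\dots,r$ with $d=i_\ell$ and $s=s_\ell$.

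The core input is this last identity, which is the degree-$d$ graded piece of the Anderson--Thakur theorem $\Gamma_{s}\zeta_{A}(s)=\sum_{j}\theta^{j}Li_{s}(u_j)$; it rests on the expansion \eqref{atpoly} and the machinery of \cite{AT}, and in the extremal range $1\leq s\leq q$ (where $H_{s-1}=1$ by Remark \ref{chen}) it reduces to Carlitz's classical evaluation $S_d(1)=1/L_d$, which I would use as a consistency check. I expect this identity to be the main obstacle: the surrounding steps are purely formal, whereas its proof requires the full relation between the coefficients $u_j$, the Carlitz factorial, and the $L_d$. Finally I would check that the term-by-term matching is legitimate modulo $\wp$: for each index $d<\deg\wp$ appearing above, the factorization $\theta^{q^{j}}-\theta=\prod_{\deg P\mid j}P$ over monic irreducibles shows that $\wp\nmid L_d$, so every $L_{i_\ell}$ is a unit in $A/\wp A$, while the hypothesis $\wp\nmid\Gamma_{s_1}\cdots\Gamma_{s_r}$ makes the outer prefactor invertible modulo $\wp$. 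Hence all of the above identities hold in $A/\wp A$, and matching the two displays term by term completes the proof.
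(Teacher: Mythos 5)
Your proposal is correct, and it is essentially the proof of the source: the paper under review does not prove Proposition \ref{fmzvfmpl} itself but quotes it from \cite{CM}, and the argument there is exactly yours --- group $\zeta_{\mathscr{A}_{k}}(\mathfrak{s})_{\wp}$ by the degrees $\deg a_\ell$ into products of power sums $S_d(s)$, then apply the Anderson--Thakur power-sum formula $\Gamma_s S_d(s)=\bigl(\sum_{j}u_j^{q^d}\theta^j\bigr)/L_d^{s}$ (your key displayed identity, which is rightly a citation to \cite{AT} rather than something to reprove) and rearrange the finite sums. Your integrality checks are also the right ones: every $a_\ell$ and every $L_{i_\ell}$ occurring has index or degree $<\deg\wp$ and hence is prime to $\wp$, while $\wp\nmid\Gamma_{s_1}\cdots\Gamma_{s_r}$ holds by hypothesis, so all identities descend to $A/\wp A$.
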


\section{Multi-poly-Bernoulli(-Carlitz) numbers}
In this section, we define multi-poly-Bernoulli-Carlitz numbers which are function field analogues of multi-poly-Bernnoulli numbers.

\subsection{Characteristic 0 case}\label{2ch0}
The {\it Bernoulli numbers} $B_n\ (n=0, 1, \ldots)$ are rational numbers defined by the following generating function
\begin{align}\label{defb}
	\sum_{n=0}^{\infty}B_n\frac{z^n}{n!}:=\frac{ze^z}{e^z-1}.
\end{align}
It is known that we have the following equation
\begin{align*}
	B_n=0\quad  ( \text{for $n\geq 3$ so that $2\nmid n$})
\end{align*}
Moreover, we know that the Bernoulli numbers are expressed as follows:
\begin{align}\label{bexp}
	B_n=(-1)^n\sum_{m=1}^{n+1}\frac{(-1)^{m-1}(m-1)! }{m}{n\brace m-1},
\end{align}
where ${n\brace m}\in\mathbb{Z}$ are the {\it Stirling numbers of the second kind} defined by
\begin{align}\label{defsti}
	\frac{(e^z-1)^{m} }{m!}=\sum^{\infty}_{n=0}{n\brace m}\frac{z^n}{n!}.
\end{align}

In 2014, K. Imatomi, M. Kaneko and E. Takeda \cite{IKT} concerned two types of the multi-poly-Bernoulli numbers which generalize the Bernoulli numbers.
\begin{defn}[\cite{IKT}, (1) and \cite{AK}, (8)]
\label{defmpbn}
For ${\mathfrak s}:=(s_1, \ldots, s_r)\in\mathbb{Z}^{r}$, the {\it multi-poly-Bernoulli numbers} (MPBNs for short) {\it of B-type, C-type} are the rational numbers which are defined by following generating functions respectively 
\begin{align*}
\sum_{n=0}^{\infty}B_n^{\mathfrak s}\frac{z^n}{n!}&:=\frac{{\rm Li}_{\mathfrak s}(1-e^{-z}, \overbrace{1, \ldots, 1}^{r-1})}{1-e^{-z}},\\
\sum_{n=0}^{\infty}C_n^{\mathfrak s}\frac{z^n}{n!}&:=\frac{{\rm Li}_{\mathfrak s}(1-e^{-z}, \overbrace{1, \ldots, 1}^{r-1})}{e^{z}-1}.
\end{align*}

\end{defn}
\begin{rem}
When $r=1$, $B_{n}^{s}$ and $C_{n}^{s}$ are the poly-Bernoulli numbers of B-type, C-type (cf. \cite{AK, MK}). When $r=1$ and $s_1=1$, ${\rm Li}_{\mathfrak s}(z_1, \ldots, z_r)=-\log(1-z)$ and then $B_n^{(1)}$ agrees with \eqref{defb} of the Bernoulli numbers. We note that $B_1^{(1)}=1/2$ and $C_1^{(1)}=-1/2$ and $B_n^{(1)}=C_n^{(1)}=B_n$ for $n\neq 1$. 
\end{rem}

\subsection{Characteristic $p$ case}
\label{2chp}
In 1935, L. Carlitz \cite{Ca} introduced the Bernoulli-Carlitz numbers, function field analogues of the Bernoulli numbers by using the Carlitz factorials $\Pi(n)$ and the {\it Carlitz exponentials} 
\[
e_C(z):=\sum_{i\geq0}\frac{z^{q^i} }{D_i}
\] 
as follows. 
\begin{defn}[\cite{Ca}]
\label{bc}
For $n\in\mathbb{Z}_{\geq0}$, the {\it Bernoulli-Carlitz numbers} $BC_n$ are the elements of $k$ defined by
\[	  
	 \sum_{n=0}^{\infty}BC_n\frac{z^n}{\Pi(n)}:=\frac{z}{e_{C}(z)}.
\]
\end{defn}
In \cite{Ca37}, L. Carlitz obtained the following: 
\[
	BC_n=0\quad \text{for $(q-1)\nmid n$}.
\]
In 2016, H. Kaneko and T. Komatsu \cite{KK} introduced the Stirling-Carlitz numbers of the first and second kind as an analogue of the Stirling numbers which were introduced in \eqref{defsti}. We recall below those of the second kind.

\begin{defn}[\cite{KK}, (15)]
\label{sc2}
For $m\in\mathbb{Z}_{\geq0}$, the {\it Stirling-Carlitz numbers of the second kind} ${n\brace m}_{C}\in k$ are defined by
\[
	\sum_{n=0}^{\infty}{n\brace m}_{C}\frac{z^n}{\Pi(n)}:=\frac{(e_C(z))^m}{\Pi(m)}.
\]
\end{defn}

In addition, they \cite{KK} showed that 
\begin{align}
\label{sc2pro}
{n\brace 0}_{C}=0\ (n\geq1),\quad {n\brace m}_{C}=0\ (n<m),\quad {n\brace n}_{C}=1\ (n\geq0)
\end{align}
and the following property.
\begin{prop}[\cite{KK}, Proposition 8]
\label{sc2van}
For $n,m\in\mathbb{Z}_{>0}$ with $\lambda(n)>\lambda(m)$,
\[
	{n\brace m}_C=0
\]
here we noted $\lambda(n):=\sum_{i}n_i$ where $n_i$ are the digits of $q$-adic expansion $n=\sum_{i}n_iq^i$.
\end{prop}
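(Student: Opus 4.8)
The plan is to read ${n\brace m}_C$ off its defining generating function and convert the asserted vanishing into a statement about characteristic-$p$ multinomial coefficients. Expanding $e_C(z)^m=\bigl(\sum_{i\ge0}z^{q^i}/D_i\bigr)^m$ by the multinomial theorem and extracting the coefficient of $z^n$ gives
\[
{n\brace m}_C=\frac{\Pi(n)}{\Pi(m)}\sum_{\substack{c_0,c_1,\ldots\ge0\\ \sum_i c_i=m,\ \sum_i c_iq^i=n}}\binom{m}{c_0,c_1,\ldots}\prod_{i}\frac{1}{D_i^{c_i}},
\]
the sum ranging over tuples of nonnegative integers that are almost all zero, and each multinomial coefficient being reduced modulo $p$ inside $k$. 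Since $\Pi(n),\Pi(m)\in A_+$ are nonzero, it suffices to show that every multinomial coefficient occurring in this finite sum vanishes in $\mathbb{F}_p$ whenever $\lambda(n)>\lambda(m)$.

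First I would invoke the multinomial generalization of Kummer's theorem (equivalently, Lucas' theorem in multinomial form): $\binom{m}{c_0,c_1,\ldots}\not\equiv0\pmod p$ precisely when the base-$p$ addition $c_0+c_1+\cdots=m$ produces no carries. So it is enough to prove the contrapositive statement: if a tuple $(c_i)$ with $\sum_i c_i=m$ and $\sum_i c_iq^i=n$ has no base-$p$ carries, then $\lambda(n)\le\lambda(m)$.

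The key step bridges base $p$ and base $q=p^s$, then uses subadditivity of the digit sum. Writing each $c_i$ in base $p$, the no-carry hypothesis says that at every base-$p$ position the digits of the $c_i$ sum to at most $p-1$; grouping base-$p$ positions into blocks of length $s$ then shows that at every base-$q$ position the base-$q$ digits of the $c_i$ sum to at most $\sum_{r=0}^{s-1}(p-1)p^r=q-1$, i.e.\ there are no base-$q$ carries in $\sum_i c_i=m$ either. Hence $\lambda(m)=\sum_i\lambda(c_i)$ exactly. On the other hand, because $\lambda$ is invariant under multiplication by $q^i$ and is subadditive, $\lambda(n)=\lambda\bigl(\sum_i c_iq^i\bigr)\le\sum_i\lambda(c_iq^i)=\sum_i\lambda(c_i)=\lambda(m)$. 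This yields $\lambda(n)\le\lambda(m)$, establishing the contrapositive; so under $\lambda(n)>\lambda(m)$ every surviving coefficient is $0$ and ${n\brace m}_C=0$.

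I expect the third step to be the main obstacle, in two respects. The first is the temptation to aim for the cleaner-looking equality $\lambda(n)=\lambda(m)$: this is false in general, since base-$q$ carries in forming $n=\sum_i c_iq^i$ genuinely lower the digit sum even when $\sum_i c_i=m$ has no base-$p$ carries, so one must be content with the inequality $\lambda(n)\le\lambda(m)$, which is exactly what is needed. The second is keeping the two radices straight: the vanishing of the multinomial coefficients is controlled by carries in base $p$, whereas $\lambda$ is a base-$q$ digit sum, and the passage between them relies essentially on $q$ being a power of $p$.
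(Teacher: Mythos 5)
Your proof is correct; every step checks out. One point of order first: the paper contains no proof of this proposition at all --- it is quoted as an imported result from \cite{KK} (Proposition 8 there) --- so your argument can only be compared with the standard treatment in the function-field literature, not with an internal argument of the paper.

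Within that comparison, your route is sound but not the most economical one. Your expansion
\[
{n\brace m}_{C}=\frac{\Pi(n)}{\Pi(m)}\sum_{\substack{c_0,c_1,\ldots\geq 0,\ \sum_i c_i=m\\ \sum_i c_iq^i=n}}\binom{m}{c_0,c_1,\ldots}\prod_{i}D_i^{-c_i}
\]
is exactly what Definition \ref{sc2} gives (and the sum is finite, since $c_i=0$ whenever $q^i>n$); the multinomial Kummer--Lucas criterion is the right test for nonvanishing modulo $p$; the passage from no base-$p$ carries to no base-$q$ carries by grouping digit positions into blocks of length $s$ (where $q=p^s$) is valid and yields $\lambda(m)=\sum_i\lambda(c_i)$; and combining this with $\lambda(n)\leq\sum_i\lambda(c_iq^i)=\sum_i\lambda(c_i)$ gives the contrapositive you need. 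Your caution that only the inequality $\lambda(n)\leq\lambda(m)$, not equality, can be expected is also well placed. The alternative route, which is how this vanishing is usually obtained in this area, bypasses Lucas entirely via $\mathbb{F}_q$-linearity: writing $m=\sum_j m_jq^j$ in base $q$, one has $e_C(z)^m=\prod_j\bigl(e_C(z)^{q^j}\bigr)^{m_j}$, and each $e_C(z)^{q^j}=\sum_i z^{q^{i+j}}\big/D_i^{q^j}$ involves only $q$-power exponents; hence every monomial occurring in $e_C(z)^m$ has exponent equal to a sum of exactly $\lambda(m)$ powers of $q$, whose base-$q$ digit sum is at most $\lambda(m)$ by the same subadditivity you invoke, so the monomial $z^n$ simply does not occur when $\lambda(n)>\lambda(m)$. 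That argument trades your two-radix carry bookkeeping for a single Frobenius identity and shows the relevant monomials are absent, whereas yours shows that their integer coefficients are all divisible by $p$; the conclusions coincide, and your proof stands as a legitimate, self-contained alternative.
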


By using the Stirling-Carlitz numbers of the second kind, they obtained the following proposition as a function field analogue of \eqref{bexp}.
\begin{prop}[\cite{KK}, Theorem 2]
\label{kkr}
For $n\in\mathbb{Z}_{\geq0}$, we have
\[
	BC_n=\sum_{j=0}^{\infty}\frac{(-1)^jD_j}{L^2_{j}}{n\brace q^j-1}_{C}.
\]
\end{prop}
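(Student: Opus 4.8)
The plan is to prove the formula
\[
	BC_n=\sum_{j=0}^{\infty}\frac{(-1)^jD_j}{L^2_{j}}{n\brace q^j-1}_{C}
\]
by comparing coefficients in generating functions, mirroring the classical derivation of \eqref{bexp}. The starting point is the defining identity of the Bernoulli-Carlitz numbers, $\sum_{n}BC_n z^n/\Pi(n)=z/e_C(z)$, together with the generating function of the Stirling-Carlitz numbers of the second kind, $\sum_{n}{n\brace m}_C z^n/\Pi(n)=(e_C(z))^m/\Pi(m)$. The right-hand side of the claimed formula is, up to the factor $\Pi(n)$ normalizing the coefficient, the coefficient of $z^n/\Pi(n)$ in the series $\sum_{j\geq0}\frac{(-1)^jD_j}{L_j^2}\frac{(e_C(z))^{q^j-1}}{\Pi(q^j-1)}$. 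So the whole assertion reduces to the single generating-function identity
\[
	\frac{z}{e_C(z)}=\sum_{j=0}^{\infty}\frac{(-1)^jD_j}{L_j^2}\,\frac{\bigl(e_C(z)\bigr)^{q^j-1}}{\Pi(q^j-1)}.
\]

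First I would rewrite this as an identity purely in the variable $w:=e_C(z)$, clearing the factor $e_C(z)=w$: it becomes $z=\sum_{j\geq0}c_j\,w^{q^j}$ for explicit constants $c_j:=(-1)^jD_j/(L_j^2\,\Pi(q^j-1))$. But $z\mapsto w=e_C(z)$ is invertible as a power series (since $e_C(z)=z+\cdots$ begins with $z$), and its inverse is exactly the Carlitz logarithm $\log_C(w)=\sum_{i\geq0}w^{q^i}/L_i$ recorded in the remark after Definition \ref{cmpl}. Thus the identity to be proved is nothing but
\[
	\log_C(w)=\sum_{j\geq0}c_j\,w^{q^j},
\]
and the whole problem collapses to checking the coefficientwise equality $c_j=1/L_j$ for every $j$, i.e.
\[
	\frac{(-1)^jD_j}{L_j^2\,\Pi(q^j-1)}=\frac{1}{L_j}.
\]

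The main obstacle, and really the only nontrivial computation, is this last arithmetic identity, which reduces to showing $\Pi(q^j-1)=(-1)^j D_j/L_j$. I would verify it from the definition of the Carlitz factorial: since $q^j-1=\sum_{i=0}^{j-1}(q-1)q^i$ has all $q$-adic digits equal to $q-1$, the product formula $\Pi(n)=\prod_i D_i^{n_i}$ gives $\Pi(q^j-1)=\prod_{i=0}^{j-1}D_i^{q-1}$. It then remains to reconcile $\prod_{i=0}^{j-1}D_i^{q-1}$ with $(-1)^jD_j/L_j$, which I expect to follow from the recursions $D_j=(\theta^{q^j}-\theta)D_{j-1}^{q}$ and $L_j=(\theta-\theta^{q^j})L_{j-1}$ satisfied by $D_j$ and $L_j$ (these are standard consequences of the product definitions in \S\ref{No}), by a short induction on $j$ that tracks both the sign $(-1)^j$ and the powers of the factors $\theta^{q^i}-\theta$. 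Once this index identity is in hand, the coefficient comparison is immediate and the generating-function proof is complete; the interchange of the (finite-per-degree) summations over $j$ and $n$ presents no convergence difficulty since in each total degree only finitely many terms contribute.
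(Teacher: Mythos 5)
Your proposal is correct, and there is nothing in the paper to compare it against directly: Proposition \ref{kkr} is imported from \cite{KK}, Theorem 2, and the paper gives no proof of it, so your argument is a genuine reconstruction. Every step is sound. Expanding the right-hand side via Definition \ref{sc2} and interchanging the sums (legitimate, as you note, because $(e_C(z))^{q^j-1}$ has $z$-adic valuation $q^j-1$, so only finitely many $j$ contribute in each degree) reduces the proposition to
\[
  z=\sum_{j\geq 0}\frac{(-1)^jD_j}{L_j^2\,\Pi(q^j-1)}\bigl(e_C(z)\bigr)^{q^j},
\]
which, granted that $\log_C$ is the compositional inverse of $e_C$, is equivalent to the coefficientwise identity $\frac{(-1)^jD_j}{L_j^2\Pi(q^j-1)}=\frac{1}{L_j}$, i.e.\ to $\Pi(q^j-1)=(-1)^jD_j/L_j$. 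Two remarks. First, this last identity, which you rightly isolate as the only nontrivial computation, is proved in the paper itself as Remark \ref{factorialeq}, equation \eqref{factdl}, by exactly the recursion $D_m^{q-1}=-D_{m+1}/\bigl(D_m(\theta-\theta^{q^{m+1}})\bigr)$ that you propose, so your induction goes through verbatim. Second, the fact that the inverse of $e_C$ is $\sum_{i\geq0}w^{q^i}/L_i$ \emph{with the paper's sign convention} $L_i=\prod_{j=1}^{i}(\theta-\theta^{q^j})$ is a classical theorem of Carlitz \cite{Ca}, not something stated in the remark after Definition \ref{cmpl} (which merely defines $\log_C$); the paper itself invokes $\log_C(e_C(z))=z$ without proof in Remark \ref{remMPBCN}, so relying on it matches the paper's level of rigor, but you should cite it as Carlitz's theorem rather than as that remark. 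Note also that in the monic normalization of $L_j$ used in \cite{KK} the logarithm carries signs $(-1)^i$; it is precisely this sign, transported through your identity, that produces the factor $(-1)^j$ in the proposition, so the convention check you defer at the end is not cosmetic --- though it does work out.
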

\begin{rem}
	In \cite{KK}, they put $L_j$ by $\prod^{j}_{i=1}(\theta^{q^i}-\theta)$. But the above equation is same to their equation (cf. \cite{KK}, (20)) due to the appearance of $L_j^2$. 
\end{rem}
\begin{rem}\label{factorialeq}
By the definition of $D_m$, we have
\[
	D_m^q=\prod^{m-1}_{i=0}(\theta^{q^{m}}-\theta^{q^{i}})^q=\prod^{m-1}_{i=0}(\theta^{q^{m+1}}-\theta^{q^{i+1}})=\prod^{m}_{i'=1}(\theta^{q^{m+1}}-\theta^{q^{i'}})=-\frac{D_{m+1}}{(\theta-\theta^{q^{m+1}})}.
\]
Thus we obtain 
\[
	D_m^{q-1}=-\frac{D_{m+1}}{D_m(\theta-\theta^{q^{m+1}})}.
\]
By the definition of Carlitz factorial, $L_j$ and the above equation, we have the following:
\begin{align}\label{factdl}
	\Pi(q^j-1)=\prod^{j-1}_{m=0}D_m^{q-1}=\prod^{j-1}_{m=0}-\frac{D_{m+1}}{D_m(\theta-\theta^{q^{m+1}})}= (-1)^j\frac{D_j}{L_j}\quad (j\in\mathbb{Z}_{\geq 0}).
\end{align}
Thus we may write the formula in Theorem \ref{kkr} as follows:
\[
	BC_n=\sum_{j=0}^{\infty}\frac{\Pi(q^j-1)}{L_{j}}{n\brace q^j-1}_{C}.
\]
\end{rem}
Next we introduce multi-poly-Bernoulli-Carlitz numbers (MPBCNs) as function field analogues of MPBNs (Definition \ref{defmpbn}).
It is defined by the following generating function.
\begin{defn}
\label{mpbc}
For ${\mathfrak s}=(s_1, \ldots, s_r)\in\mathbb{N}^r$, ${\bf j}=(j_1, \ldots, j_r)\in J_{\mathfrak s}$ (for $J_{\mathfrak{s}}$, see Notation \ref{notasym}), we define {\it multi-poly-Bernoulli-Carlitz numbers} (MPBCNs for short) $BC^{{\mathfrak s}, {\bf j}}_{n}$ to be elements of $k$ as follows:
\begin{align}\label{mpbcgen}
	\sum_{n\geq 0}BC^{{\mathfrak s}, {\bf j}}_{n}\frac{z^n}{\Pi(n)}:=\frac{Li_{\mathfrak s}(e_{C}(z)u_{1j_{1} }, u_{2j_{2} }, \ldots, u_{rj_{r} } ) }{e_{C}(z) }.
\end{align}
\end{defn}

\begin{rem}
\label{remMPBCN}
In the case when $r=1$ and $s_1=1$ in the above definition,
we have $Li_{\mathfrak s}(z_1, \ldots, z_r)=\log_{C}(z)$ and $J_{\mathfrak s}=\{ 0 \}, u_{1j_1}=u_{10}=1$ since $H_{ {s_1}-1 }=H_0=1$. Hence we recover the Definition \ref{bc} by
\[
	\sum_{n\geq 0}BC^{(1), (0)}_{n}\frac{z^n}{\Pi(n)}
	=\frac{\log_{C}(e_{C}(z)) }{e_{C}(z) }=\frac{z}{e_{C}(z)}.
\]
This is the one we have seen in Definition \ref{bc} so we have 
\begin{align}\label{bcfrommpbc}
	BC_{n}^{(1), (0)}=BC_n.
\end{align}	 
\end{rem}

\begin{rem}\label{mpbcnvan}
Let $g$ be a generator of $\mathbb{F}_{q}^{\times}$ then we have
\begin{align}\label{gen}
	g^n=1\Leftrightarrow (q-1)|n.
\end{align}

By the definition, it follows that $e_C(gz)=ge_C(z)$. Then by \eqref{mpbcgen} and Definition \ref{cmpl}, we have
\begin{align*}
	\sum_{n\geq 0}BC^{{\mathfrak s}, {\bf j}}_{n}\frac{(gz)^n}{\Pi(n)}&=\frac{Li_{\mathfrak s}(e_{C}(gz)u_{1j_{1} }, u_{2j_{2} }, \ldots, u_{rj_{r} } ) }{e_{C}(gz) }=\sum_{i_1>\cdots>i_r\geq0} e_C(gz)^{q^{i_1}-1} \frac{u_{1j_1}^{q^{i_1}}\cdots u_{rj_r}^{q^{i_r}} }{L_{i_1}^{s_1}\cdots L_{i_r}^{s_r}}\\
\intertext{by using $e_C(gz)=ge_C(z)$ and \eqref{gen},}
&=\sum_{i_1>\cdots>i_r\geq0} e_C(z)^{q^{i_1}-1} \frac{u_{1j_1}^{q^{i_1}}\cdots u_{rj_r}^{q^{i_r}} }{L_{i_1}^{s_1}\cdots L_{i_r}^{s_r}}=\frac{Li_{\mathfrak s}(e_{C}(z)u_{1j_{1} }, u_{2j_{2} }, \ldots, u_{rj_{r} } ) }{e_{C}(z) }\\
	&=\sum_{n\geq 0}BC^{{\mathfrak s}, {\bf j}}_{n}\frac{z^n}{\Pi(n)}.
\end{align*}
By comparing the coefficients of $z^n$, we have
$
	g^nBC^{{\mathfrak s}, {\bf j}}_{n}=BC^{{\mathfrak s}, {\bf j}}_{n}.  
$
Therefore we obtain the following by \eqref{gen}: 
\[
	BC^{{\mathfrak s}, {\bf j}}_{n}=0\quad \text{for $(q-1)\nmid n$}.
\]
\end{rem}
The MPBNs are defined for $s_i\in\mathbb{Z}$, on the other hand our MPBCNs are defined for $s_i\in\mathbb{N}$. It is because in Definition \ref{mpbc}, we use $u_{i{j_i}}$, the coefficients of $H_{s_i-1}$ which are defined for $s_i\in\mathbb{N}$. 
We remark that we do not have two kinds of MPBCNs as we do in Definition \ref{defmpbn}.

\section{Several properties of multi-poly-Bernoulli(-Carlitz) numbers}
In this section, we obtain function field analogues of some results in \cite{IKT}.
In subsection \ref{mainch0}, we recall their results in characteristic $0$ case. 
In subsection \ref{mainchp}, we prove their analogue in characteristic $p$ case.
\subsection{Characteristic 0 case}
\label{mainch0}
In \cite{IKT}, K. Imatomi, M. Kaneko, and E. Takeda obtained explicit formulae for MPBNs.
They are the following finite sums involving the Stirling numbers of the second kind.     
\begin{prop}[\cite{IKT}, Theorem 3]\label{iktexp}
For ${\mathfrak s}=(s_1, \ldots, s_r)\in\mathbb{Z}^r$ and $n\geq0$, we have
\[
	B^{\mathfrak s}_n=(-1)^n\sum_{n+1\geq m_1>m_2>\cdots>m_r>0}(-1)^{m_1-1}(m_1-1)! {n\brace m_1-1}\frac{ 1 }{m_1^{s_1}\cdots m_r^{s_r} }
\]
and
\[
	C^{\mathfrak s}_n=(-1)^n\sum_{n+1\geq m_1>m_2>\cdots>m_r>0}(-1)^{m_1-1}(m_1-1)! {n+1\brace m_1}\frac{ 1 }{m_1^{s_1}\cdots m_r^{s_r} }.
\]
\end{prop}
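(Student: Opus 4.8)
The plan is to derive both identities directly from the generating-function definitions in Definition \ref{defmpbn} by expanding the multiple polylogarithm, isolating the single factor that depends on the outermost index $m_1$, and expressing the remaining power of $1-e^{-z}$ through Stirling numbers. The basic input is the definition \eqref{defsti} of ${n\brace m}$ after the substitution $z\mapsto -z$, which gives
\[
(1-e^{-z})^m=(-1)^m(e^{-z}-1)^m=(-1)^m\, m!\sum_{n\geq0}(-1)^n{n\brace m}\frac{z^n}{n!}.
\]
Here the outer variables of ${\rm Li}_{\mathfrak s}(1-e^{-z},1,\ldots,1)$ are specialized to $z_2=\cdots=z_r=1$, so the numerator collapses to $\sum_{m_1>\cdots>m_r>0}(1-e^{-z})^{m_1}/(m_1^{s_1}\cdots m_r^{s_r})$.

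For the B-type numbers I would use the cancellation $(1-e^{-z})^{m_1}/(1-e^{-z})=(1-e^{-z})^{m_1-1}$, so that
\[
\sum_{n\geq0}B^{\mathfrak s}_n\frac{z^n}{n!}=\sum_{m_1>\cdots>m_r>0}\frac{(1-e^{-z})^{m_1-1}}{m_1^{s_1}\cdots m_r^{s_r}}.
\]
Substituting the displayed expansion with $m=m_1-1$ and comparing the coefficients of $z^n/n!$ on both sides yields the claimed formula for $B^{\mathfrak s}_n$; the constraint $n+1\geq m_1$ appears automatically because ${n\brace m_1-1}=0$ whenever $m_1-1>n$.

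For the C-type numbers the prefactor is $e^z-1$ rather than $1-e^{-z}$, so I would first rewrite $1/(e^z-1)=e^{-z}/(1-e^{-z})$, which turns the $m_1$-summand into $e^{-z}(1-e^{-z})^{m_1-1}$. The key observation is that $e^{-z}(1-e^{-z})^{m_1-1}=\frac{1}{m_1}\frac{d}{dz}(1-e^{-z})^{m_1}$; differentiating the displayed expansion (now with $m=m_1$) and reindexing $n\mapsto n+1$ converts ${n\brace m_1}$ into ${n+1\brace m_1}$ and produces exactly the sign $(-1)^{m_1-1}$ and the factor $(m_1-1)!$ that appear in the statement. Comparing coefficients then gives the formula for $C^{\mathfrak s}_n$, with the range again forced by the vanishing ${n+1\brace m_1}=0$ for $m_1>n+1$.

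The computations are elementary once the right rewriting of the prefactor is chosen, so the main point requiring care is the C-type case: recognizing the summand as a derivative and carrying out the reindexing cleanly so that the lower index of the Stirling number shifts from $m_1-1$ to $m_1$ while the upper argument shifts from $n$ to $n+1$, keeping track of all signs. One should also note that, for each fixed $n$, only finitely many $m_1$ (those with $m_1\leq n+1$) contribute, which both justifies interchanging the two summations and pins down the upper limit $m_1\leq n+1$ in the final expressions.
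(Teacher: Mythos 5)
Your proof is correct. A point of context: the paper does not actually prove this proposition at all — it is quoted as a known result (\cite{IKT}, Theorem 3) — so there is no internal proof to compare yours against. That said, your generating-function argument is exactly the method the paper uses for its own function-field analogue, Theorem \ref{mpbceq}: expand the multiple polylogarithm at the specialized arguments $z_2=\cdots=z_r=1$, isolate the single factor carrying the outer index $m_1$, rewrite that factor via the defining expansion of the Stirling numbers (\eqref{defsti} in characteristic $0$, Definition \ref{sc2} in the Carlitz setting), interchange the two sums — legitimate, as you note, because for fixed $n$ only the finitely many $m_1\leq n+1$ contribute — and compare coefficients. Your B-type computation is immediate, and your C-type step is valid: from $\frac{1}{e^z-1}=\frac{e^{-z}}{1-e^{-z}}$ and $e^{-z}(1-e^{-z})^{m_1-1}=\frac{1}{m_1}\frac{d}{dz}(1-e^{-z})^{m_1}$, term-by-term differentiation of $(1-e^{-z})^{m_1}=(-1)^{m_1}m_1!\sum_{n\geq0}(-1)^n{n\brace m_1}\frac{z^n}{n!}$ followed by the reindexing $n\mapsto n+1$ gives $e^{-z}(1-e^{-z})^{m_1-1}=\sum_{n\geq0}(-1)^n(-1)^{m_1-1}(m_1-1)!{n+1\brace m_1}\frac{z^n}{n!}$, whose coefficients are precisely the claimed summands; the sign bookkeeping $(-1)^{m_1}(-1)^{n+1}=(-1)^{m_1-1}(-1)^n$ checks out, and the vanishing ${n+1\brace m_1}=0$ for $m_1>n+1$ pins down the range of summation. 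So the proposal would serve as a complete, self-contained proof of the cited result.
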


In \cite{IKT}, they derived the following relations between the MPBNs and the Bernoulli numbers for the special case $(s_1, \ldots, s_r)=(1, \ldots, 1)$.
\begin{prop}[\cite{IKT}, Proposition 4]\label{iktspind}
For $r\geq1$ and $n\geq r-1$, we have
\begin{align*}
	B_n^{(\overbrace{1, \ldots, 1}^{r})}&=\frac{1}{n+1}\binom{n+1}{r}B^{(1)}_{n-r+1},\\
	C_n^{(\overbrace{1, \ldots, 1}^{r})}&=\frac{1}{n+1}\binom{n+1}{r}C^{(1)}_{n-r+1}.
\end{align*}
\end{prop}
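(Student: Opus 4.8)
The plan is to prove both identities simultaneously at the level of generating functions, by reducing them to a single closed-form evaluation of the multiple polylogarithm at the all-ones index. Concretely, I would first isolate the claim that
\[
	{\rm Li}_{(\overbrace{1, \ldots, 1}^{r})}(1-e^{-z}, \overbrace{1, \ldots, 1}^{r-1})=\frac{z^r}{r!}.
\]
Granting this, the proposition is immediate. By Definition \ref{defmpbn} the generating function of $B_n^{(\overbrace{1, \ldots, 1}^{r})}$ becomes $\frac{z^r/r!}{1-e^{-z}}=\frac{z^{r-1}}{r!}\cdot\frac{z}{1-e^{-z}}$, and since the $r=1$, $s_1=1$ case gives $\sum_{\ell\geq0}B_\ell^{(1)}z^\ell/\ell!=z/(1-e^{-z})$, comparing the coefficient of $z^n$ on both sides yields $B_n^{(\overbrace{1, \ldots, 1}^{r})}/n!=B_{n-r+1}^{(1)}/\bigl(r!\,(n-r+1)!\bigr)$, which is exactly $\frac{1}{n+1}\binom{n+1}{r}B_{n-r+1}^{(1)}$ (and forces $n\geq r-1$, matching the hypothesis). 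The C-type identity is word-for-word the same, replacing $1-e^{-z}$ by $e^z-1$ in the denominator and using $\sum_{\ell\geq0}C_\ell^{(1)}z^\ell/\ell!=z/(e^z-1)$.

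So the whole weight of the argument sits in the displayed closed form, which I would prove by induction on $r$ through a differential equation in the single variable $u:=1-e^{-z}$ (note $-\log(1-u)=z$). Set $f_r(u):={\rm Li}_{(\overbrace{1, \ldots, 1}^{r})}(u, \overbrace{1, \ldots, 1}^{r-1})=\sum_{m_1>\cdots>m_r>0}\frac{u^{m_1}}{m_1\cdots m_r}$. Differentiating the series term-by-term cancels the factor $m_1$ in the denominator, and for each fixed $m_2>\cdots>m_r$ the inner sum over $m_1\geq m_2+1$ is the geometric series $\sum_{k\geq m_2}u^k=u^{m_2}/(1-u)$; this produces the recursion
\[
	f_r'(u)=\frac{1}{1-u}\,f_{r-1}(u),\qquad f_r(0)=0\ (r\geq1),\qquad f_0=1.
\]
The candidate $g_r(u):=(-\log(1-u))^r/r!$ satisfies the identical recursion and initial conditions, so $f_r=g_r$ by induction, and substituting $u=1-e^{-z}$ gives the claim.

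The main obstacle is precisely to recognize and justify this closed form: one must keep careful track of the fact that only the top index $m_1$ carries the variable while the remaining entries are evaluated at $1$, so that differentiation peels off exactly one level of depth, and the term-by-term differentiation and reindexing must be read as an identity of formal power series (equivalently, on a disk of convergence). Once that identity is in hand, everything else is bookkeeping with factorials and binomial coefficients, together with the normalizations $B_n^{(1)}=C_n^{(1)}=B_n$ for $n\neq1$ recorded in the remark following Definition \ref{defmpbn}, so I expect no further difficulty beyond the closed-form lemma.
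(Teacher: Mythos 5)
Your proof is correct, but note that the paper itself contains no proof of Proposition \ref{iktspind}: it is recalled verbatim from \cite{IKT} (Proposition 4) as known background in characteristic $0$, and the paper's actual contribution is the function-field analogue proved afterwards by quite different means (the explicit formula of Theorem \ref{mpbceq} plus the vanishing of Stirling-Carlitz numbers). So there is no in-paper argument to compare against; judged on its own, your argument is complete and is the standard route one would expect (and, as far as the cited source goes, essentially the route taken there). The whole content is indeed the closed form ${\rm Li}_{(\overbrace{1,\ldots,1}^{r})}(u,1,\ldots,1)=\frac{(-\log(1-u))^{r}}{r!}$, and your derivation of it is sound: the recursion $f_r'(u)=\frac{1}{1-u}f_{r-1}(u)$, $f_r(0)=0$, $f_0=1$, pins $f_r$ down uniquely as a formal power series, and the geometric-series step $\sum_{m_1>m_2}u^{m_1-1}=u^{m_2}/(1-u)$ together with the coefficientwise-finite reindexing is legitimate formal-series manipulation. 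Substituting $u=1-e^{-z}$, dividing by $1-e^{-z}$ (resp.\ $e^{z}-1$), and matching coefficients against $z/(1-e^{-z})=\sum_{\ell}B^{(1)}_{\ell}z^{\ell}/\ell!$ (resp.\ $z/(e^{z}-1)=\sum_{\ell}C^{(1)}_{\ell}z^{\ell}/\ell!$) gives $B_n^{(\overbrace{1,\ldots,1}^{r})}=\frac{n!}{r!\,(n-r+1)!}B^{(1)}_{n-r+1}=\frac{1}{n+1}\binom{n+1}{r}B^{(1)}_{n-r+1}$, with the hypothesis $n\geq r-1$ arising exactly as you say; the C-type case is identical. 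One cosmetic point: the normalization $B_n^{(1)}=C_n^{(1)}=B_n$ for $n\neq 1$ that you invoke at the end is never actually used, since the statement is phrased in terms of $B^{(1)}_{n-r+1}$ and $C^{(1)}_{n-r+1}$ themselves.
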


In \cite{IKT}, they obtained the following relations which connect the MPBNs and finite multiple zeta values. 

\begin{prop}[\cite{IKT}, Theorem 8]\label{IKTprop}
For ${\mathfrak s}=(s_1, \ldots, s_r)\in\mathbb{Z}^r$, we have
\begin{align*}
	\zeta_{\mathscr A}({\mathfrak s})_{(p)}&=-C^{(s_1-1, s_2, \ldots, s_r)}_{p-2}\ \text{\rm mod $p$}\\
	\intertext{and for $d\geq0$,}
	\zeta_{\mathscr A}(\underbrace{1, \ldots, 1}_{d}, s_1, \ldots, s_r)_{(p)}&=-C^{(s_1-1, s_2,  \ldots, s_r)}_{p-d-2}\ \text{\rm mod $p$}
\end{align*}
\end{prop}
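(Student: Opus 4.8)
The plan is to derive both congruences from the explicit formula for the $C$-type numbers in Proposition \ref{iktexp}, reducing the Stirling numbers that appear modulo $p$ by means of Fermat's little theorem. First I would record the arithmetic input. From the closed form ${N\brace m}=\frac{1}{m!}\sum_{j=0}^{m}(-1)^{m-j}\binom{m}{j}j^{N}$ one gets $(m-1)!{N\brace m}=\frac{1}{m}\sum_{j=1}^{m}(-1)^{m-j}\binom{m}{j}j^{N}$ (the $j=0$ term drops for $N\ge 1$). Taking $N=p-1$ and using $j^{p-1}\equiv 1\pmod p$ for $1\le j\le m\le p-1$, together with $\sum_{j=0}^{m}(-1)^{m-j}\binom{m}{j}=0$ for $m\ge 1$, yields the key congruence
\[
(-1)^{m-1}(m-1)!{p-1\brace m}\equiv\frac{1}{m}\pmod p\qquad(1\le m\le p-1).
\]

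For the first identity I would substitute $n=p-2$ and the index $(s_1-1,s_2,\dots,s_r)$ into Proposition \ref{iktexp}. Since $(-1)^{p-2}=-1$, the leading sign and the factor $1/m_1$ coming from the key congruence combine with $1/m_1^{s_1-1}$ to restore the exponent $s_1$, while the constraint $p-1\ge m_1>\cdots>m_r>0$ is exactly the summation range of $\zeta_{\mathscr A}(\mathfrak s)_{(p)}$. Hence $C_{p-2}^{(s_1-1,s_2,\dots,s_r)}\equiv-\zeta_{\mathscr A}(\mathfrak s)_{(p)}\pmod p$, which is the claim.

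For the $d$-fold case the same substitution with $n=p-d-2$ produces the Stirling number ${p-1-d\brace m}$. Now $j^{p-1-d}\equiv j^{-d}\pmod p$, so the analogue of the key congruence involves the finite difference $\frac{1}{m}\sum_{j=1}^{m}(-1)^{m-j}\binom{m}{j}j^{-d}$, which is a depth-$d$ multiple harmonic (star) sum whose indices are bounded by $m$. Converting this harmonic sum into the $d$ leading $1$'s of $\zeta_{\mathscr A}(\underbrace{1,\dots,1}_{d},s_1,\dots,s_r)_{(p)}$, whose indices exceed $m_1$ and run strictly, is the crux, and I expect this reconciliation of the two harmonic sums to be the main obstacle. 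It is effected by the reflection $n\mapsto p-n\pmod p$ (equivalently the reversal relation for finite multiple zeta values), which turns a sum supported on $\{1,\dots,m\}$ into one supported on $\{m+1,\dots,p-1\}$ at the cost of a sign $(-1)^{d}$ that must be tracked against the factor $(-1)^{p-d-2}=(-1)^{d+1}$.

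A cleaner way to organize this last step, and the route I would ultimately pursue, is to package the whole family over $d$ by a formal variable $X$ and compare generating series. One has the elementary product expansion
\[
\sum_{d\ge0}\zeta_{\mathscr A}(\{1\}^{d},\mathfrak s)_{(p)}\,X^{d}=\sum_{p>m_1>\cdots>m_r>0}\frac{1}{m_1^{s_1}\cdots m_r^{s_r}}\prod_{m_1<n<p}\Bigl(1+\tfrac{X}{n}\Bigr),
\]
and dually a generating series of the numbers $C_{p-2-d}^{(s_1-1,\dots,s_r)}$. Evaluating the product modulo $p$ through $\prod_{n=0}^{p-1}(X+n)\equiv X^{p}-X$ and Wilson's theorem reduces both sides to the same rational function of $X$ over $\mathbb{F}_p$, and matching the coefficient of $X^{d}$ delivers all the identities at once; the $d=0$ coefficient recovers the first statement as a consistency check.
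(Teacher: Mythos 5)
The paper never proves this proposition: it is quoted as known background, namely Theorem 8 of \cite{IKT}, and the only statement of this type that the paper proves is its characteristic-$p$ analogue, Theorem \ref{thmfin}. So there is no in-paper proof to compare yours against; I can only assess your argument on its own merits and set it beside the proof of that analogue.

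Your argument is correct, provided you rely on the generating-function step rather than the ``reflection'' remark. The key congruence $(-1)^{m-1}(m-1)!{p-1\brace m}\equiv \frac{1}{m} \pmod p$ for $1\le m\le p-1$ is right, and substituting it into Proposition \ref{iktexp} at $n=p-2$ with index $(s_1-1,s_2,\ldots,s_r)$ gives the first formula, the sign $(-1)^{p-2}=-1$ (for odd $p$) producing the minus sign. For general $d$ you correctly locate the crux, and your suspicion about the reflection step is warranted: the substitution $n\mapsto p-n$ turns $\sum_{p>n_1>\cdots>n_d>m_1}\frac{1}{n_1\cdots n_d}$ into $(-1)^d$ times the \emph{strict} harmonic sum over $\{1,\ldots,p-1-m_1\}$, whereas Fermat applied to ${p-d-1\brace m_1}$ produces, via $\sum_{j=1}^{m}\binom{m}{j}\frac{(-1)^{j-1}}{j^{d}}=\sum_{m\ge n_1\ge\cdots\ge n_d\ge1}\frac{1}{n_1\cdots n_d}$, the \emph{non-strict} sum over $\{1,\ldots,m_1\}$; identifying these two mod $p$ is an extra duality that reflection alone does not furnish. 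Your generating-series plan does furnish it: Wilson's theorem and $\prod_{n=0}^{p-1}(X+n)\equiv X^p-X$ give $\prod_{n=1}^{p-1}\bigl(1+\tfrac{X}{n}\bigr)\equiv 1-X^{p-1}\pmod p$, hence $\prod_{m_1<n<p}\bigl(1+\tfrac{X}{n}\bigr)\equiv(1-X^{p-1})\prod_{n=1}^{m_1}\bigl(1+\tfrac{X}{n}\bigr)^{-1}$, and since $\prod_{n=1}^{m_1}\bigl(1+\tfrac{X}{n}\bigr)^{-1}=\sum_{d\ge0}(-X)^d\sum_{m_1\ge n_1\ge\cdots\ge n_d\ge1}\frac{1}{n_1\cdots n_d}$, matching coefficients of $X^d$ for $0\le d\le p-2$ in your two series yields precisely $\zeta_{\mathscr A}(\{1\}^d,\mathfrak s)_{(p)}\equiv-C^{(s_1-1,s_2,\ldots,s_r)}_{p-d-2}$; here the factor $(-X)^d$ absorbs the sign $(-1)^{p-d-2}=(-1)^{d+1}$ you flagged. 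Two routine points you should still record: the summation bound $m_1\le p-d-1$ in Proposition \ref{iktexp} can be relaxed to $m_1\le p-1$ because ${p-d-1\brace m_1}=0$ beyond it (and correspondingly no tuple $p>n_1>\cdots>n_d>m_1$ exists on the zeta side), and all rational quantities involved are $p$-integral, so the reductions are legitimate.

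For contrast, the paper's proof of the characteristic-$p$ counterpart (Theorem \ref{thmfin}) introduces no formal variable: it iterates the recursion of Lemma \ref{mpbceq2} $d$ times to peel off the leading $1$'s, with the vanishing of Stirling-Carlitz numbers (Proposition \ref{sc2van}) and the identity \eqref{osaka} playing the role that Fermat and Wilson play for you. Your packaging of the whole family over $d$ into one series is a genuinely different, and arguably tidier, organization of the same inductive content.
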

Here we note that the second relation generalizes the first relation. 

\subsection{Characteristic $p$ case}
\label{mainchp}
We prove function field analogues of Proposition \ref{iktexp}-\ref{IKTprop}.
The following theorem is a function field analogue of Proposition \ref{iktexp}.
\begin{thm}
\label{mpbceq}
For $r\in\mathbb{N}$, ${\mathfrak s}=(s_1, \ldots, s_r)\in\mathbb{N}^r$, ${\bf j}=(j_1, \ldots, j_r)\in J_{\mathfrak s}$ and $n\in\mathbb{Z}_{\geq0}$,
\begin{align}\label{expl}
	BC^{{\mathfrak s}, {\bf j}}_{n}=\sum_{\log_{q}(n+1)\geq i_1>\cdots>i_r\geq0}\Pi(q^{i_1}-1){n\brace q^{i_1}-1}_{C}\frac{u_{1j_1}^{q^{i_1} }\cdots u_{rj_r}^{q^{i_r} } }{L_{i_1}^{s_1}\cdots L_{i_r}^{s_r}}.
\end{align}
\end{thm}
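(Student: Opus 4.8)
The plan is to read off the coefficient of $z^{n}/\Pi(n)$ from the defining generating function \eqref{mpbcgen}, treating both sides as power series in $z$ over $k$. First I would expand the numerator by Definition \ref{cmpl}. Since only the first argument of the Carlitz multiple polylogarithm carries the variable $z$, this gives
\[
	Li_{\mathfrak s}(e_{C}(z)u_{1j_1}, u_{2j_2}, \ldots, u_{rj_r})=\sum_{i_1>\cdots>i_r\geq0}e_{C}(z)^{q^{i_1}}\frac{u_{1j_1}^{q^{i_1}}u_{2j_2}^{q^{i_2}}\cdots u_{rj_r}^{q^{i_r}}}{L_{i_1}^{s_1}\cdots L_{i_r}^{s_r}},
\]
and after dividing by $e_{C}(z)$ the factor $e_{C}(z)^{q^{i_1}}$ becomes $e_{C}(z)^{q^{i_1}-1}$ in each summand. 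This isolates all the $z$-dependence into a single power of $e_{C}(z)$ in the leading index.

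Next I would convert that power of $e_{C}(z)$ into a series in $z$ using the Stirling-Carlitz generating function of Definition \ref{sc2} with $m=q^{i_1}-1$, namely
\[
	e_{C}(z)^{q^{i_1}-1}=\Pi(q^{i_1}-1)\sum_{n\geq0}{n\brace q^{i_1}-1}_{C}\frac{z^n}{\Pi(n)}.
\]
Substituting this and interchanging the summation over $(i_1,\ldots,i_r)$ with the summation over $n$, the coefficient of $z^n/\Pi(n)$ is exactly
\[
	\sum_{i_1>\cdots>i_r\geq0}\Pi(q^{i_1}-1){n\brace q^{i_1}-1}_{C}\frac{u_{1j_1}^{q^{i_1}}\cdots u_{rj_r}^{q^{i_r}}}{L_{i_1}^{s_1}\cdots L_{i_r}^{s_r}},
\]
which by the definition of $BC^{{\mathfrak s},{\bf j}}_{n}$ gives the desired identity apart from the range of the outer index. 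To obtain the stated bound I would invoke the vanishing ${n\brace m}_{C}=0$ for $n<m$ from \eqref{sc2pro}: with $m=q^{i_1}-1$, any summand with $q^{i_1}-1>n$, i.e.\ $i_1>\log_{q}(n+1)$, vanishes, so the outer sum truncates to $\log_{q}(n+1)\geq i_1$ with no change in value.

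The point requiring the most care is the interchange of the infinite summation over $(i_1,\ldots,i_r)$ with the infinite Stirling-Carlitz expansion, which must be justified rather than merely asserted. The clean way to handle this is precisely the truncation above run \emph{before} comparing coefficients: for a fixed power $z^n$, the vanishing property forces $i_1\leq\log_{q}(n+1)$, and then the strict inequalities $i_1>i_2>\cdots>i_r\geq0$ leave only finitely many admissible tuples. Hence the coefficient of each $z^n$ is already a finite sum, the rearrangement is legitimate term by term, and no analytic convergence argument is needed. I would therefore present the truncation step first, so that the interchange is manifestly an equality of finite sums, and only afterwards extract the coefficient to conclude.
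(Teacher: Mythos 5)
Your proposal is correct and follows essentially the same route as the paper's proof: expand the Carlitz multiple polylogarithm via Definition \ref{cmpl}, rewrite $e_C(z)^{q^{i_1}-1}$ using Definition \ref{sc2} with $m=q^{i_1}-1$, interchange the summations, truncate the range of $i_1$, and compare coefficients of $z^n/\Pi(n)$. Your explicit appeal to the vanishing ${n\brace m}_C=0$ for $n<m$ from \eqref{sc2pro} to justify both the truncation and the legitimacy of the interchange (each coefficient being a finite sum) is a minor refinement of a point the paper passes over silently, not a different argument.
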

\begin{proof}
By Definition \ref{cmpl}, the right hand side of \eqref{mpbcgen} is translated as follows.
\begin{align*}
	\frac{Li_{\mathfrak s}(e_{C}(z)u_{1j_{1} }, u_{2j_{2} }, \ldots, u_{rj_{r} } ) }{e_{C}(z) }&=\sum_{i_1>\cdots>i_r\geq0} e_C(z)^{q^{i_1}-1} \frac{u_{1j_1}^{q^{i_1}}\cdots u_{rj_r}^{q^{i_r}} }{L_{i_1}^{s_1}\cdots L_{i_r}^{s_r}}
\end{align*}	
\begin{align*}	
	\intertext{by Definition \ref{sc2} for $m=q^{i_1}-1$,}
	&=\sum_{i_1>\cdots>i_r\geq0} \sum_{n\geq0}\Pi (q^{i_1}-1){n\brace q^{i_1}-1}_{C}\frac{z^n}{\Pi(n)} \frac{u_{1j_1}^{q^{i_1}}\cdots u_{rj_r}^{q^{i_r}} }{L_{i_1}^{s_1}\cdots L_{i_r}^{s_r}}\\
	&=\sum_{n\geq0}\sum_{i_1>\cdots>i_r\geq0}\Pi (q^{i_1}-1) {n\brace q^{i_1}-1}_{C} \frac{u_{1j_1}^{q^{i_1}}\cdots u_{rj_r}^{q^{i_r}} }{L_{i_1}^{s_1}\cdots L_{i_r}^{s_r}}\frac{z^n}{\Pi(n)}\\
	&=\sum_{n\geq0}\sum_{\log_{q}(n+1)\geq i_1>\cdots>i_r\geq0}\Pi (q^{i_1}-1) {n\brace q^{i_1}-1}_{C} \frac{u_{1j_1}^{q^{i_1}}\cdots u_{rj_r}^{q^{i_r}} }{L_{i_1}^{s_1}\cdots L_{i_r}^{s_r}}\frac{z^n}{\Pi(n)}.
\end{align*}   
Then by Definition \ref{mpbc}, we have
\[
	\sum_{n\geq 0}BC^{{\mathfrak s}, {\bf j}}_{n}\frac{z^n}{\Pi(n)}=\sum_{n\geq0}\sum_{\log_{q}(n+1)\geq i_1>\cdots>i_r\geq0}\Pi (q^{i_1}-1) {n\brace q^{i_1}-1}_{C} \frac{u_{1j_1}^{q^{i_1}}\cdots u_{rj_r}^{q^{i_r}} }{L_{i_1}^{s_1}\cdots L_{i_r}^{s_r}}\frac{z^n}{\Pi(n)}.
\]
By comparing the coefficients of $z^{n}$, we obtain the formula \eqref{expl}.
\end{proof}

\begin{rem}
When $r=1$ and $s_1=1$, we have $H_{s_1-1}=H_{0}=1$.
Then $J_{\mathfrak s}=\{ 0 \}, u_{1j_1}=u_{10}=1$ hence we have  
\begin{align*}
	BC_n^{(1), (0)}&=\sum_{\log_q(n+1)\geq i_1\geq0}\Pi(q^{i_1}-1){n \brace q^{i_1}-1}_{C}\frac{1}{L_{i_1}}\\
	\intertext{by using \eqref{factdl},}
	               &=\sum_{\log_q(n+1)\geq i_1\geq0}(-1)^{i_1}\frac{D_{i_1} }{L_{i_1}^2 }{n\brace q^{i_1}-1}_{C}.
\end{align*}

Therefore by Remark \ref{remMPBCN} our Theorem \ref{mpbceq} includes H. Kaneko and T. Komatsu's result (Proposition \ref{kkr}) in the case of $r=1$ and $s_1=1$. 
\end{rem}

We obtain the following relation between the MPBCNs and the Bernoulli-Carlitz numbers for the tuple $(1, \ldots, 1)$ as a function field analogue of Proposition \ref{iktspind}. 
\begin{thm}
For $r\geq1$ and $n\geq q^{r-1}-1$, we have
\begin{align}\label{mpbcsp}
	BC_n^{(\overbrace{1, \ldots, 1}^{r}), (\overbrace{0, \ldots, 0}^{r})}=\sum_{\log_q(n+1)\geq i_1>\cdots>i_r\geq0}{n \brace q^{i_1}-1}_C BC_{q^{i_1}-1}\frac{BC_{q^{i_2}-1}}{\Pi(q^{i_2}-1)}\cdots\frac{BC_{q^{i_r}-1}}{\Pi(q^{i_r}-1)}.
\end{align}
\end{thm}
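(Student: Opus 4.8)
The plan is to derive \eqref{mpbcsp} directly from the explicit formula of Theorem \ref{mpbceq} by specializing the indices and then reinterpreting the factors $1/L_{i_\ell}$ as Bernoulli-Carlitz numbers. First I would set $\mathfrak s=(1,\ldots,1)$ and $\mathbf j=(0,\ldots,0)$ in Theorem \ref{mpbceq}. As recorded in Remark \ref{remMPBCN}, the choice $H_{s_i-1}=H_0=1$ forces $u_{ij_i}=u_{i0}=1$ for every $i$, while $s_\ell=1$ turns each $L_{i_\ell}^{s_\ell}$ into $L_{i_\ell}$. Hence \eqref{expl} collapses to
\[
	BC_n^{(1,\ldots,1),(0,\ldots,0)}=\sum_{\log_q(n+1)\geq i_1>\cdots>i_r\geq0}\Pi(q^{i_1}-1)\,{n\brace q^{i_1}-1}_C\,\frac{1}{L_{i_1}L_{i_2}\cdots L_{i_r}}.
\]

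The key step, and the one I expect to be the main obstacle, is the auxiliary identity $BC_{q^i-1}=\Pi(q^i-1)/L_i$ for each $i\geq0$. To prove it I would apply Proposition \ref{kkr} at $n=q^i-1$, obtaining $BC_{q^i-1}=\sum_{j\geq0}\frac{\Pi(q^j-1)}{L_j}{q^i-1\brace q^j-1}_C$, and then show that the Stirling-Carlitz factor vanishes off the diagonal $j=i$. Indeed, by \eqref{sc2pro} the term with $j>i$ vanishes because $q^i-1<q^j-1$; and since the $q$-adic digit sum satisfies $\lambda(q^i-1)=i(q-1)$, Proposition \ref{sc2van} annihilates the term with $j<i$, for then $\lambda(q^i-1)>\lambda(q^j-1)$. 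Only $j=i$ survives, and ${q^i-1\brace q^i-1}_C=1$ by \eqref{sc2pro}, which yields $BC_{q^i-1}=\Pi(q^i-1)/L_i$, equivalently $1/L_i=BC_{q^i-1}/\Pi(q^i-1)$.

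Finally I would substitute $1/L_{i_\ell}=BC_{q^{i_\ell}-1}/\Pi(q^{i_\ell}-1)$ into the displayed specialization. For $\ell=1$ the prefactor $\Pi(q^{i_1}-1)$ cancels the denominator of $1/L_{i_1}$ and leaves $BC_{q^{i_1}-1}$, while for $\ell=2,\ldots,r$ each factor becomes $BC_{q^{i_\ell}-1}/\Pi(q^{i_\ell}-1)$; this reproduces the right-hand side of \eqref{mpbcsp} term by term. The hypothesis $n\geq q^{r-1}-1$ is precisely the condition that the index set $\log_q(n+1)\geq i_1>\cdots>i_r\geq0$ be nonempty, since the smallest admissible $i_1$ is $r-1$; thus no separate case analysis is needed. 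The only genuine content is the diagonal collapse of ${q^i-1\brace q^j-1}_C$, which rests on the digit-sum vanishing criterion of Proposition \ref{sc2van}, and everything else is bookkeeping.
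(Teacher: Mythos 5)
Your proposal is correct and follows essentially the same route as the paper: specialize Theorem \ref{mpbceq} to $\mathfrak{s}=(1,\ldots,1)$, $\mathbf{j}=(0,\ldots,0)$, prove the diagonal collapse $BC_{q^i-1}/\Pi(q^i-1)=1/L_i$ via Proposition \ref{kkr} together with the vanishing results \eqref{sc2pro} and Proposition \ref{sc2van}, and substitute back. If anything, your case split for the collapse ($j>i$ via \eqref{sc2pro}, $j<i$ via the digit-sum criterion) is slightly more explicit than the paper's, which cites only Proposition \ref{sc2van}.
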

\begin{proof}
Let us first prove an equation 
\begin{align}\label{osaka}
	\frac{BC_{q^{i}-1}}{\Pi(q^{i}-1)}=\frac{1}{L_{i}}.
\end{align}
It follows from Proposition \ref{kkr} that we have
\begin{align*}
	BC_{q^{i}-1}&=\sum_{j=0}^{\infty}\frac{(-1)^jD_j}{L_j^2}{q^i-1 \brace q^j-1}_C.
\end{align*}
The right hand side is translated as follows:
\[
	\sum_{j=0}^{\infty}\frac{(-1)^jD_j}{L_j^2}{q^i-1 \brace q^j-1}_C=\frac{(-1)^iD_i}{L_i^2}=\frac{\Pi(q^{i}-1)}{L_i} .
\]
The first equality follows from Proposition \ref{sc2van}, the second one follows from \eqref{factdl}.   
Then we have the equation \eqref{osaka}.

It follows from Theorem \ref{mpbceq} that we have
\[
	BC_n^{(\overbrace{1, \ldots, 1}^{r}), (\overbrace{0, \ldots, 0}^{r})}=\sum_{\log_q(n+1)\geq i_1>\cdots>i_r\geq0}{n \brace q^{i_1}-1}_C\frac{\Pi(q^{i_1}-1)}{L_{i_1}\cdots L_{i_r}}.
\]
By using the equation \eqref{osaka} to the right hand side,
\begin{align*}
	&BC_n^{(\overbrace{1, \ldots, 1}^{r}), (\overbrace{0, \ldots, 0}^{r})}\\
	&\quad =\sum_{\log_q(n+1)\geq i_1>\cdots>i_r\geq0}{n \brace q^{i_1}-1}_C\Pi(q^{i_1}-1) \frac{BC_{q^{i_1}-1}}{\Pi(q^{i_1}-1)}\frac{BC_{q^{i_2}-1}}{\Pi(q^{i_2}-1)}\cdots\frac{BC_{q^{i_r}-1}}{\Pi(q^{i_r}-1)}.
\end{align*}
Therefore we obtain the desired equation \eqref{mpbcsp}.
\end{proof}

Next, before we see a function field analogue of Proposition \ref{IKTprop}, we prepare the following lemma. 
\begin{lem} 
\label{mpbceq2}
When $r\geq 2$, we have the following equation for $\mathfrak{s}=(s_1, \ldots, s_r)\in\mathbb{N}^r$, ${\bf j}\in J_{\mathfrak s}$ and $m\geq r-1$.  
\begin{align}\label{lemeq}
	BC_{q^m-1}^{{\mathfrak s}, {\bf j}}=BC_{q^m-1}^{(s_1), (j_1)}\sum_{\alpha=1}^{m-(r-2)}\frac{1}{\Pi(q^{m-\alpha}-1)}BC_{q^{m-\alpha}-1}^{(s_2, \ldots, s_r), (j_2, \ldots, j_r)}.
\end{align}
\end{lem}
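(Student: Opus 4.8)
The plan is to derive \eqref{lemeq} directly from the explicit formula of Theorem \ref{mpbceq}, using the vanishing of the Stirling-Carlitz numbers (Proposition \ref{sc2van}) to collapse the outermost summation index. First I would substitute $n=q^m-1$ into \eqref{expl}; since $\log_q(n+1)=m$, the sum runs over $m\geq i_1>\cdots>i_r\geq0$. The key point is that $\lambda(q^m-1)=m(q-1)$ while $\lambda(q^{i_1}-1)=i_1(q-1)$, so Proposition \ref{sc2van} forces ${q^m-1\brace q^{i_1}-1}_C=0$ whenever $i_1<m$ (using also ${n\brace 0}_C=0$ from \eqref{sc2pro} for the boundary case $i_1=0$), whereas for $i_1=m$ we have ${q^m-1\brace q^m-1}_C=1$. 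Hence only $i_1=m$ survives, and the formula reduces to a sum over $m>i_2>\cdots>i_r\geq0$.

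Next I would extract the $r=1$ specialization: the same collapse gives $BC_{q^m-1}^{(s_1),(j_1)}=\Pi(q^m-1)\,u_{1j_1}^{q^m}/L_m^{s_1}$, which is precisely the common factor carried by every surviving term above. Factoring it out leaves
\[
	BC_{q^m-1}^{{\mathfrak s}, {\bf j}}=BC_{q^m-1}^{(s_1),(j_1)}\sum_{m>i_2>\cdots>i_r\geq0}\frac{u_{2j_2}^{q^{i_2}}\cdots u_{rj_r}^{q^{i_r}}}{L_{i_2}^{s_2}\cdots L_{i_r}^{s_r}}.
\]
Finally I would re-index the remaining sum by $\alpha:=m-i_2$, so that $i_2=m-\alpha$ and the inner constraint becomes $m-\alpha>i_3>\cdots>i_r\geq0$. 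Applying Theorem \ref{mpbceq} together with the same vanishing argument to the $(r-1)$-tuple $(s_2,\ldots,s_r)$ evaluated at argument $q^{m-\alpha}-1$ identifies the $\alpha$-th inner sum with $\frac{1}{\Pi(q^{m-\alpha}-1)}BC_{q^{m-\alpha}-1}^{(s_2,\ldots,s_r),(j_2,\ldots,j_r)}$, since its outer index again collapses to $m-\alpha$ and the factor $\Pi(q^{m-\alpha}-1)$ cancels. Summing over the admissible $\alpha$ then yields \eqref{lemeq}.

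I expect the main obstacle to be purely combinatorial: pinning down the exact range $1\leq\alpha\leq m-(r-2)$ so that this re-indexing is a bijection matching the right-hand side. The lower bound $\alpha\geq1$ reflects $i_2<i_1=m$, while the upper bound comes from requiring $i_2=m-\alpha\geq r-2$ so that the $r-2$ further strictly decreasing indices $i_3>\cdots>i_r\geq0$ can fit below it; this is exactly where the hypothesis $m\geq r-1$ guarantees the sum is nonempty. Some care is also needed to confirm that the repeated collapse of the outer index via Proposition \ref{sc2van} is legitimate at each nesting level, i.e.\ that the argument $q^{m-\alpha}-1$ of the shorter tuple again has all $q$-adic digits equal to $q-1$, which is what makes the inner Stirling-Carlitz factor collapse to $1$.
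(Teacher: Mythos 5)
Your proposal is correct and follows essentially the same route as the paper's proof: substitute $n=q^m-1$ into Theorem \ref{mpbceq}, collapse the outer index to $i_1=m$ via Proposition \ref{sc2van} and \eqref{sc2pro}, factor out $BC_{q^m-1}^{(s_1),(j_1)}$, re-index by $\alpha=m-i_2$, and identify each inner sum with $\frac{1}{\Pi(q^{m-\alpha}-1)}BC_{q^{m-\alpha}-1}^{(s_2,\ldots,s_r),(j_2,\ldots,j_r)}$ by the same collapsing argument for the truncated tuple. Your handling of the boundary case $i_1=0$ and of the range $1\leq\alpha\leq m-(r-2)$ is, if anything, slightly more explicit than the paper's.
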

\begin{proof}
By using Theorem \ref{mpbceq}, we have
\begin{align*}
BC^{{\mathfrak s}, {\bf j}}_{q^m-1}=\sum_{m\geq i_1>\cdots>i_r\geq0}{q^m-1\brace q^{i_1}-1}_{C}\Pi(q^{i_1}-1)\frac{u_{1j_1}^{q^{i_1} }\cdots u_{rj_r}^{q^{i_r} } }{L_{i_1}^{s_1}\cdots L_{i_r}^{s_r}}.
\end{align*} 
All digits of the $q$-adic expansion of $q^{i_1}-1$ and $q^m-1$ are $q-1$. Therefore we have
\begin{align}\label{sc20or1}
	{q^m-1\brace q^{i_1}-1}_C=\begin{cases}
							0& \text{if $m>i_1$,}\\
							1& \text{if $m=i_1$,}
						\end{cases}
\end{align}
by Proposition \ref{sc2van} and \eqref{sc2pro}. Then by using \eqref{sc20or1}, we have
\begin{align*}
	BC^{{\mathfrak s}, {\bf j}}_{q^m-1}=\sum_{m>i_2>\cdots>i_r\geq0}\Pi(q^m-1)\frac{u_{1j_1}^{q^m }u_{2j_2}^{q^{i_2}}\cdots u_{rj_r}^{q^{i_r} } }{L_{m}^{s_1}L_{i_2}^{s_2}\cdots L_{i_r}^{s_r}}=\Pi(q^{m}-1)\frac{u_{1j_1}^{q^{m} }}{L_{m}^{s_1}}\sum_{m>i_2>\cdots>i_r\geq0}\frac{u_{2j_2}^{q^{i_2}}\cdots u_{rj_r}^{q^{i_r} } }{L_{i_2}^{s_2}\cdots L_{i_r}^{s_r}}.
\end{align*}
By using Theorem \ref{mpbceq}, we have
\begin{align}\label{totyu}
	BC^{{\mathfrak s}, {\bf j}}_{q^m-1}&=BC_{q^m-1}^{(s_1), (j_1)}\sum_{m>i_2>\cdots>i_r\geq0}\frac{u_{2j_2}^{q^{i_2}}\cdots u_{rj_r}^{q^{i_r} } }{L_{i_2}^{s_2}\cdots L_{i_r}^{s_r}}\\
	&=BC_{q^m-1}^{(s_1), (j_1)}\sum_{\alpha=1}^{m-(r-2)}\frac{1}{\Pi(q^{m-\alpha}-1) }\sum_{m-\alpha>i_3>\cdots>i_r\geq0}\Pi(q^{m-\alpha}-1)\frac{u_{2j_2}^{q^{m-\alpha} }u_{3j_3}^{q^{i_3}}\cdots u_{rj_r}^{q^{i_r} } }{L_{m-\alpha}^{s_2}L_{i_3}^{s_3}\cdots L_{i_r}^{s_r}}\nonumber\\
	\intertext{again by using Theorem \ref{mpbceq},}
	&=BC_{q^m-1}^{(s_1), (j_1)}\sum_{\alpha=1}^{m-(r-2)}\frac{1}{\Pi(q^{m-\alpha}-1)}BC_{q^{m-\alpha}-1}^{(s_2, \ldots, s_r), (j_2, \ldots, j_r)}.\nonumber
\end{align}
Then we obtain the desired equation \eqref{lemeq}.
\end{proof}

The following result is an analogue of Proposition \ref{IKTprop} which provides the connection between MPBCNs and finite multiple zeta values in the function field case. 

\begin{thm}\label{thmfin}
For ${\mathfrak s}=(s_1, \ldots, s_r)\in\mathbb{N}^r$ and a monic irreducible polynomial $\wp\in A$ so that $\wp \nmid \Gamma_{s_1}\cdots\Gamma_{s_r}$, we have the following:
\begin{align}\label{thmeq1}
	\zeta_{\mathscr{A}_{k}}({\mathfrak s})_{\wp}=\frac{1}{\Gamma_{s_1}\cdots\Gamma_{s_r}}\sum_{{\bf j}\in J_{\mathfrak s}}a_{\bf j}(\theta)\sum_{i=r-1}^{\deg\wp -1}\frac{1}{L_i}\frac{BC_{q^{i}-1}^{{\mathfrak s}, {\bf j} } }{BC_{q^i-1}}\mod \wp.
\end{align}

For ${\mathfrak s}=(\underbrace{1, \ldots, 1}_{d}, s_1, \ldots, s_r)\in\mathbb{N}^{r+d}$ $(d\geq 0)$ and a monic irreducible polynomial $\wp\in A$ so that $\wp \nmid \Gamma_{s_1}\cdots\Gamma_{s_r}$, we have the following:
\begin{align}\label{thmeq2}
\zeta_{\mathscr{A}_{k}}({\mathfrak s})_{\wp}&=\frac{1}{\Gamma_{s_1}\cdots\Gamma_{s_r}}\sum_{{\bf j'}\in J_{\mathfrak s'}}a_{\bf j'}(\theta)\sum_{\deg\wp>i_0>\cdots>i_d\geq r-1}\frac{1}{L_{i_0}\cdots L_{i_d}}\frac{BC^{{\mathfrak s'}, {\bf j'}}_{q^{i_d}-1}}{BC_{q^{i_d}-1}} \mod\wp.
\end{align}
Here we put ${\mathfrak s}'=(s_1, \ldots, s_r)$.
\end{thm}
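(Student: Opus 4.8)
The plan is to obtain both identities directly from the Chang--Mishiba expansion in Proposition \ref{fmzvfmpl}; the only substantive work is to rewrite each finite Carlitz multiple polylogarithm $Li_{\mathscr A_k,\mathfrak s}(\mathbf{u_j})_\wp$ as a sum of the ratios $BC^{\mathfrak s,\mathbf j}_{q^i-1}/BC_{q^i-1}$.

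First I would record the pointwise identity
\[
	\frac{1}{L_i}\,\frac{BC^{\mathfrak s,\mathbf j}_{q^i-1}}{BC_{q^i-1}}
	=\frac{u_{1j_1}^{q^i}}{L_i^{s_1}}\sum_{i>i_2>\cdots>i_r\ge 0}\frac{u_{2j_2}^{q^{i_2}}\cdots u_{rj_r}^{q^{i_r}}}{L_{i_2}^{s_2}\cdots L_{i_r}^{s_r}}.
\]
This follows from Theorem \ref{mpbceq} evaluated at $n=q^i-1$: every digit of $q^i-1$ equals $q-1$, so by the collapse \eqref{sc20or1} only the summand with top index $i_1=i$ survives, giving $BC^{\mathfrak s,\mathbf j}_{q^i-1}=\Pi(q^i-1)\,u_{1j_1}^{q^i}L_i^{-s_1}\sum_{i>i_2>\cdots>i_r\ge0}(\cdots)$, and dividing by $BC_{q^i-1}=\Pi(q^i-1)/L_i$ (the identity \eqref{osaka}) yields the claim.

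For \eqref{thmeq1} I would sum this identity over $r-1\le i\le\deg\wp-1$, writing $i=i_1$. The nesting $i_1>i_2>\cdots>i_r\ge0$ forces $i_1\ge r-1$ automatically, so no terms are lost at the bottom, and the cutoff $i_1\le\deg\wp-1$ is exactly the condition $\deg\wp>i_1$ in Definition \ref{fcmpldef}; the sum therefore reassembles into $Li_{\mathscr A_k,\mathfrak s}(\mathbf{u_j})_\wp\bmod\wp$, and substituting into Proposition \ref{fmzvfmpl} gives \eqref{thmeq1}. For \eqref{thmeq2} I would apply Proposition \ref{fmzvfmpl} to the length-$(d+r)$ tuple $\mathfrak s=(\,\underbrace{1,\dots,1}_{d},s_1,\dots,s_r\,)$. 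Since $\Gamma_1=1$ the normalizing product $\Gamma_{s_1}\cdots\Gamma_{s_r}$ is unchanged, and since $H_0=1$ (Remark \ref{chen}) the $d$ leading slots carry $m_i=0$ and $u=1$, so $J_{\mathfrak s}\cong J_{\mathfrak s'}$ and $a_{\mathbf J}(\theta)=a_{\mathbf{j'}}(\theta)$. Rewriting the pointwise identity as $BC^{\mathfrak s',\mathbf{j'}}_{q^{i_d}-1}/BC_{q^{i_d}-1}=L_{i_d}\cdot(\text{tail in the slots }s_1,\dots,s_r)$, the explicit factor $L_{i_d}$ cancels the $L_{i_d}$ in the denominator $L_{i_0}\cdots L_{i_d}$, while the $d$ leading indices supply $1/(L_{i_0}\cdots L_{i_{d-1}})$; reindexing the resulting nested sum recovers the full finite Carlitz polylogarithm $Li_{\mathscr A_k,\mathfrak s}(\mathbf{u_J})_\wp$, and \eqref{thmeq2} follows.

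I expect the main obstacle to be the summation bookkeeping: cleanly peeling the top index $i_1$ (respectively the top $d+1$ indices $i_0,\dots,i_d$) off the nested polylogarithm sum and matching its range against the MPBCN ratio, checking in particular that $i_1\ge r-1$ and $i_d\ge r-1$ are forced by the nesting rather than imposed as extra hypotheses, and that the $d$ trivial slots contribute exactly the missing $L$-factors.
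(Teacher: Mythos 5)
Your proposal is correct, and while it uses the same raw ingredients as the paper (Theorem \ref{mpbceq}, the digit collapse \eqref{sc20or1}, the identity \eqref{osaka}, and Proposition \ref{fmzvfmpl} at the end), it organizes them along a genuinely different and leaner route. Both proofs ultimately rest on the identity $\sum_{i=r-1}^{\deg\wp-1}\frac{1}{L_i}\frac{BC^{\mathfrak s,\mathbf j}_{q^i-1}}{BC_{q^i-1}}=Li_{\mathscr A_k,\mathfrak s}(\mathbf u_{\mathbf j})_\wp \bmod \wp$, but the paper reaches it indirectly: it forms the auxiliary tuple $(1,s_1,\ldots,s_r)$, evaluates the single quantity $BC^{(1,\mathfrak s),(0,\mathbf j)}_{q^{\deg\wp}-1}/BC_{q^{\deg\wp}-1}$ in two ways --- via the collapse \eqref{totyu}/\eqref{hokkaido} to get the polylogarithm, and via Lemma \ref{mpbceq2} plus \eqref{osaka} to get the sum of ratios \eqref{akita} --- and equates the two. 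You instead prove the termwise identity (your pointwise identity is exactly \eqref{totyu} at $m=i$ divided by $BC_{q^i-1}=\Pi(q^i-1)/L_i$) and simply sum it over the top index; Lemma \ref{mpbceq2} never enters. The divergence is sharpest for \eqref{thmeq2}: the paper deduces it from \eqref{thmeq1} applied to $(1,\ldots,1,s_1,\ldots,s_r)$ by peeling off the $d$ leading $1$'s through $d$ successive applications of Lemma \ref{mpbceq2} and the $\alpha_i$/$\beta_i$ reindexing, whereas you apply Proposition \ref{fmzvfmpl} once to the extended tuple and decompose the nested polylogarithm in a single pass, the $d$ leading slots ($H_0=1$, $\Gamma_1=1$, $u=1$) giving the factors $1/(L_{i_0}\cdots L_{i_{d-1}})$ and the tail with top index $i_d$ being your ratio. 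Your anticipated bookkeeping worries do resolve exactly as you expect: terms with $i_1<r-1$, resp.\ $i_d<r-1$, vanish because the inner nested sum is empty (equivalently $BC^{\mathfrak s',\mathbf j'}_{q^{i_d}-1}=0$ by Theorem \ref{mpbceq}), so truncating at $r-1$ loses nothing. What the paper's longer route buys is Lemma \ref{mpbceq2} itself --- a recursion among MPBCNs of independent interest, mirroring the Imatomi--Kaneko--Takeda structure --- and the presentation of \eqref{thmeq2} as a formal consequence of \eqref{thmeq1}; what your route buys is brevity and a uniform treatment of both equations at once.
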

\begin{proof}
We first prove that the equation \eqref{thmeq1}.
By \eqref{bcfrommpbc} and \eqref{totyu}, we have the following for $(1, s_1, \ldots, s_r)\in\mathbb{N}^{r+1}$:
\begin{align}\label{hokkaido}
	BC_{q^{\deg\wp}-1}^{(1, s_1, \ldots, s_r), (0, j_1, \ldots, j_r)}&=BC_{q^{\deg\wp}-1}^{(1), (0)}\sum_{\deg\wp>i_1>\cdots>i_r\geq0}\frac{u_{1j_1}^{q^{i_1}}\cdots u_{rj_r}^{q^{i_r} } }{L_{i_1}^{s_1}\cdots L_{i_r}^{s_r}}\nonumber\\ 
		&=BC_{q^{\deg\wp}-1}\sum_{\deg\wp>i_1>\cdots>i_r\geq0}\frac{u_{1j_1}^{q^{i_1}}\cdots u_{rj_r}^{q^{i_r} } }{L_{i_1}^{s_1}\cdots L_{i_r}^{s_r}}.
\end{align}
By Proposition \ref{sc2van} and Proposition \ref{kkr}, we have
\[
	BC_{q^{\deg\wp}-1}=(-1)^{\deg\wp}\frac{D_{\deg\wp}}{L_{\deg\wp}^2}.
\] 
Then $BC_{q^{\deg\wp}-1}$ is invertible in $k$ because of $D_{\deg\wp}, L_{\deg\wp}\in A_+$ and therefore by using \eqref{hokkaido}, we have
\begin{align*}
	\frac{BC_{q^{\deg\wp}-1}^{(1, s_1, \ldots, s_r), (0, j_1, \ldots, j_r)}}{BC_{q^{\deg\wp}-1}}=\sum_{\deg\wp>i_1>\cdots>i_r\geq0}\frac{u_{1j_1}^{q^{i_1}}\cdots u_{rj_r}^{q^{i_r} } }{L_{i_1}^{s_1}\cdots L_{i_r}^{s_r}}.
\end{align*}
By using equation \eqref{lemeq} in Lemma \ref{mpbceq2} for ${\mathfrak s}=(1, s_1, \ldots, s_r)$ and $m=\deg\wp$, we have
\begin{align*}
	&BC_{q^{\deg\wp}-1}^{(1, s_1, \ldots, s_r), (0, j_1, \ldots, j_r)}\\
	&\quad =BC_{q^{\deg\wp}-1}^{(1), (0)}\sum_{\alpha=1}^{\deg\wp-(r-1)}\frac{1}{\Pi(q^{\deg\wp-\alpha}-1)}BC_{q^{\deg\wp-\alpha}-1}^{(s_1, \ldots, s_r), (j_1, \ldots, j_r)}.\nonumber
\end{align*}
By equation \eqref{bcfrommpbc}, we have
\begin{align}\label{akita}
	\frac{BC_{q^{\deg\wp}-1}^{(1, s_1, \ldots, s_r), (0, j_1, \ldots, j_r)}}{BC_{q^{\deg\wp}-1}}&=\sum_{\alpha=1}^{\deg\wp-(r-1)}\frac{1}{\Pi(q^{\deg\wp-\alpha}-1)}BC_{q^{\deg\wp-\alpha}-1}^{(s_1, \ldots, s_r), (j_1, \ldots, j_r)}\nonumber\\
	\intertext{by putting $i=\deg\wp-\alpha$ and using the equation \eqref{osaka},}
		&=\sum_{i=r-1}^{\deg\wp -1}\frac{1}{L_i}\frac{BC_{q^{i}-1}^{(s_1, \ldots, s_r), (j_1, \ldots, j_r)}}{BC_{q^i-1}}
\end{align}
Therefore by the equations \eqref{hokkaido}, \eqref{akita} and Definition \ref{fcmpldef}, we obtain
\[
	\sum_{i=r-1}^{\deg\wp -1}\frac{1}{L_i}\frac{BC_{q^{i}-1}^{{\mathfrak s}, {\bf j} } }{BC_{q^i-1}}=Li_{\mathscr{A}_{k}, \mathfrak{s}}({\bf u_{j}})_{\wp}\ \text{mod $\wp$}.
\]
By our assumption $\wp\nmid \Gamma_{s_1}\cdots\Gamma_{s_r}$ we may apply Proposition \ref{fmzvfmpl} and obtain the desired formula \eqref{thmeq1}.

Next we prove the equation \eqref{thmeq2}. By using \eqref{thmeq1} for ${\mathfrak s}=(\underbrace{1, \ldots, 1}_{d}, s_1, \ldots, s_r)$, we have
\begin{align}\label{hyogo}
	\frac{1}{\Gamma_{1}^d\Gamma_{s_1}\cdots\Gamma_{s_r}}\sum_{{\bf j}\in J_{\mathfrak s}}a_{\bf j}(\theta)\sum_{\alpha=1}^{\deg\wp-(d+r-1)}\frac{BC^{{\mathfrak s}, {\bf j}}_{q^{\deg\wp-\alpha}-1}}{\Pi(q^{\deg\wp-\alpha}-1)}=\zeta_{\mathscr{A}_{k}}({\mathfrak s})_{\wp}\mod \wp
\end{align}
We may rewrite $BC^{{\mathfrak s}, {\bf j}}_{q^{\deg\wp-\alpha}-1}/\Pi(q^{\deg\wp-\alpha}-1)$ by using MPBCNs for $(s_1, \ldots, s_r)$. By \eqref{lemeq} in Lemma  \ref{mpbceq2}, 
\begin{align*}
	\frac{BC^{{\mathfrak s}, {\bf j}}_{q^{\deg\wp-\alpha}-1}}{\Pi(q^{\deg\wp-\alpha}-1)}
	&=\frac{BC^{(\overbrace{1, \ldots, 1}^{d}, s_1, \ldots, s_r), (\overbrace{0, \ldots, 0}^{d}, j_1, \ldots, j_r)}_{q^{\deg\wp-\alpha}-1}}{\Pi(q^{\deg\wp-\alpha}-1)}\\
	&=\frac{BC^{(1), (0)}_{q^{\deg\wp-\alpha}-1}}{\prod(q^{\deg\wp-\alpha}-1)}\sum^{\deg\wp-\alpha-(d+r-2)}_{\alpha_1=1}\frac{BC^{(\overbrace{1, \ldots, 1}^{d-1}, s_1, \ldots, s_r), (\overbrace{0, \ldots, 0}^{d-1}, j_1, \ldots, j_r)}_{q^{\deg\wp-\alpha-\alpha_1}-1}}{\Pi(q^{\deg\wp-\alpha-\alpha_1}-1)}
	\end{align*}
	\begin{align*}
	\intertext{by using \eqref{lemeq} repeatedly, }
	&=\frac{BC^{(1), (0)}_{q^{\deg\wp-\alpha}-1}}{\prod(q^{\deg\wp-\alpha}-1)}\sum^{\deg\wp-\alpha-(d+r-2)}_{\alpha_1=1}\frac{BC^{(1), (0)}_{q^{\deg\wp-\alpha-\alpha_1}-1}}{\prod(q^{\deg\wp-\alpha-\alpha_1}-1)}\sum^{\deg\wp-\alpha-\alpha_1-(d+r-3)}_{\alpha_2=1}\frac{BC^{(1), (0)}_{q^{\deg\wp-\alpha-\alpha_1-\alpha_2}-1}}{\prod(q^{\deg\wp-\alpha-\alpha_1-\alpha_2}-1)}\\
	&\quad\cdots\sum^{\deg\wp-\alpha-\alpha_1-\cdots-\alpha_{d-1}-(r-1)}_{\alpha_d=1}\frac{BC^{(s_1, \ldots, s_r), (j_1, \ldots, j_r)}_{q^{\deg\wp-\alpha-\alpha_1-\cdots-\alpha_{d}}-1}}{\prod(q^{\deg\wp-\alpha-\alpha_1-\cdots-\alpha_{d}}-1)}\\
	\intertext{by putting $\beta_i=\alpha+\alpha_1+\cdots+\alpha_i$ ($i\geq 1$) and $\beta_0=\alpha$,} 
	&=\frac{BC^{(1), (0)}_{q^{\deg\wp-\beta_0}-1}}{\prod(q^{\deg\wp-\beta_0}-1)}\sum^{\deg\wp-\beta_0-(d+r-2)}_{\beta_1-\beta_0=1}\frac{BC^{(1), (0)}_{q^{\deg\wp-\beta_1}-1}}{\prod(q^{\deg\wp-\beta_1}-1)}\sum^{\deg\wp-\beta_1-(d+r-3)}_{\beta_2-\beta_1=1}\frac{BC^{(1), (0)}_{q^{\deg\wp-\beta_2}-1}}{\prod(q^{\deg\wp-\beta_2}-1)}\\
	&\quad\cdots\sum^{\deg\wp-\beta_{d-1}-(r-1)}_{\beta_d-\beta_{d-1}=1}\frac{BC^{(s_1, \ldots, s_r), (j_1, \ldots, j_r)}_{q^{\deg\wp-\beta_{d}}-1}}{\prod(q^{\deg\wp-\beta_{d}}-1)}\\
	&=\frac{BC^{(1), (0)}_{q^{\deg\wp-\beta_0}-1}}{\prod(q^{\deg\wp-\beta_0}-1)}\sum^{\deg\wp-(d+r-2)}_{\beta_1=\beta_0+1}\frac{BC^{(1), (0)}_{q^{\deg\wp-\beta_1}-1}}{\prod(q^{\deg\wp-\beta_1}-1)}\sum^{\deg\wp-(d+r-3)}_{\beta_2=\beta_1+1}\frac{BC^{(1), (0)}_{q^{\deg\wp-\beta_2}-1}}{\prod(q^{\deg\wp-\beta_2}-1)}\\
	&\quad\cdots\sum^{\deg\wp-(r-1)}_{\beta_d=\beta_{d-1}+1}\frac{BC^{(s_1, \ldots, s_r), (j_1, \ldots, j_r)}_{q^{\deg\wp-\beta_{d}}-1}}{\prod(q^{\deg\wp-\beta_{d}}-1)}\\
	\intertext{by using \eqref{bcfrommpbc} and \eqref{osaka}, }
	&=\frac{1}{L_{\deg\wp-\beta_0}}\sum^{\deg\wp-(d+r-2)}_{\beta_1=\beta_0+1}\frac{1}{L_{\deg\wp-\beta_1}}\sum^{\deg\wp-(d+r-3)}_{\beta_2=\beta_1+1}\frac{1}{L_{\deg\wp-\beta_2}}\cdots\sum^{\deg\wp-(r-1)}_{\beta_d=\beta_{d-1}+1}\frac{BC^{(s_1, \ldots, s_r), (j_1, \ldots, j_r)}_{q^{\deg\wp-\beta_{d}}-1}}{\prod(q^{\deg\wp-\beta_{d}}-1)}.
\end{align*}
Then we have
\begin{align*}
\sum_{\beta_0=1}^{\deg\wp-(d+r-1)}&\frac{BC^{{\mathfrak s}, {\bf j}}_{q^{\deg\wp-\beta_0}-1}}{\Pi(q^{\deg\wp-\beta_0}-1)}=\sum_{\beta_0=1}^{\deg\wp-(d+r-1)}\sum^{\deg\wp-(d+r-2)}_{\beta_1=\beta_0+1}\cdots\sum^{\deg\wp-(r-1)}_{\beta_d=\beta_{d-1}+1}\frac{1}{L_{\deg\wp-\beta_0}}\frac{1}{L_{\deg\wp-\beta_1}}\\
	&\quad\cdots\frac{1}{L_{\deg\wp-\beta_{d-1}}}\frac{BC^{(s_1, \ldots, s_r), (j_1, \ldots, j_r)}_{q^{\deg\wp-\beta_{d}}-1}}{\prod(q^{\deg\wp-\beta_{d}}-1)}\\
	&=\sum_{\substack{\beta_d>\cdots>\beta_0>0\\ \deg\wp-(d+r-l-1)\geq\beta_l\text{for each $l$} }}\frac{1}{L_{\deg\wp-\beta_0}}\cdots\frac{1}{L_{\deg\wp-\beta_{d-1}}}\frac{BC^{(s_1, \ldots, s_r), (j_1, \ldots, j_r)}_{q^{\deg\wp-\beta_{d}}-1}}{\prod(q^{\deg\wp-\beta_{d}}-1)}\\
	&=\sum_{\deg\wp-(r-1)\geq\beta_d>\cdots>\beta_0>0}\frac{1}{L_{\deg\wp-\beta_0}}\cdots\frac{1}{L_{\deg\wp-\beta_{d-1}}}\frac{BC^{(s_1, \ldots, s_r), (j_1, \ldots, j_r)}_{q^{\deg\wp-\beta_{d}}-1}}{\prod(q^{\deg\wp-\beta_{d}}-1)}\\
\intertext{by putting $i_l=\deg\wp-\beta_l\ \ (d\geq l\geq 0)$,}
&=\sum_{\deg\wp>i_0>\cdots>i_d\geq r-1}\frac{1}{L_{i_0}}\cdots\frac{1}{L_{i_{d-1}}}\frac{BC^{(s_1, \ldots, s_r), (j_1, \ldots, j_r)}_{q^{i_d}-1}}{\prod(q^{i_d}-1)}
\intertext{by using the equation \eqref{osaka},}
&=\sum_{\deg\wp>i_0>\cdots>i_d\geq r-1}\frac{1}{L_{i_0}}\cdots\frac{1}{L_{i_{d}}}\frac{BC^{(s_1, \ldots, s_r), (j_1, \ldots, j_r)}_{q^{i_d}-1}}{BC_{q^{i_d}-1}}
\end{align*}
Substituting this into the equation \eqref{hyogo} and by $\Gamma_{1}=1$, we have 
\begin{align*}
\frac{1}{\Gamma_{s_1}\cdots\Gamma_{s_r}}\sum_{{\bf j}\in J_{\mathfrak s}}a_{\bf j}(\theta)&\sum_{\deg\wp>i_0>\cdots>i_d\geq r-1}\frac{1}{L_{i_0}\cdots L_{i_d}}\frac{BC^{(s_1, \ldots, s_r), (j_1, \ldots, j_r)}_{q^{i_d}-1}}{BC_{q^{i_d}-1}}
	=\zeta_{\mathscr{A}_{k}}({\mathfrak s})_{\wp}\ \text{mod $\wp$}.
\end{align*}

For ${\mathfrak s}=(1, \ldots, 1, s_1, \ldots, s_r)$, we have $J_{\mathfrak s}=\{0\}\times\cdots\times\{0\}\times\{0, 1, \ldots, m_{1} \}\times\cdots\times\{0, 1, \ldots, m_{r}\}$ so $a_{\bf j}(\theta)=\theta^{j_1+\cdots +j_r}$ for ${\bf j}=(0, \ldots, 0, j_1, \ldots, j_r)\in J_{\mathfrak s}$ and thus $a_{\bf j}(\theta)$ depends only on ${\bf j'}=(j_1, \ldots j_r) \in J_{\mathfrak s'}$. Therefore the above equation is rewritten as follows:
\begin{align*}
\frac{1}{\Gamma_{s_1}\cdots\Gamma_{s_r}}\sum_{{\bf j'}\in J_{\mathfrak s'}}a_{\bf j'}(\theta)&\sum_{\deg\wp>i_0>\cdots>i_d\geq r-1}\frac{1}{L_{i_0}\cdots L_{i_d}}\frac{BC^{{\mathfrak s'}, {\bf j'}}_{q^{i_d}-1}}{BC_{q^{i_d}-1}}
	=\zeta_{\mathscr{A}_{k}}({\mathfrak s})_{\wp}\ \text{mod $\wp$}.
\end{align*}
Thus we obtain the equation \eqref{thmeq2}.
\end{proof}
We remark that the relation \eqref{thmeq2} is a generalization of $\eqref{thmeq1}$.

\section*{Acknowledgments}
The author is deeply grateful to Professor H. Furusho for guiding him towards this topic. This paper could not have been written without his continuous encouragements. He gratefully acknowledges Professor Y. Mishiba for indicating him towards generalizations of Remark \ref{mpbcnvan} and the relation \eqref{thmeq2} which improved this paper. He would also like to thank NCTS for their kind support during his stay at NTHU and Daiko Foundation for financial support. 

\end{document}